\documentclass{amsart}
\usepackage[utf8]{inputenc}
\usepackage{amssymb}
\usepackage{amsmath}
\usepackage{amsthm, amsfonts, mathrsfs}
\usepackage{amsmath}
\usepackage{amssymb}
\usepackage{amsthm}
\usepackage{graphicx}

\usepackage{amsrefs}
\newtheorem{thm}{Theorem}[section]
\newtheorem{cor}[thm]{Corollary}

\newtheorem{prop}[thm]{Proposition}

\newtheorem{qu}[thm]{Question}
\newtheorem{ex}[thm]{Example}
\newtheorem{df}[thm]{Definition}

\newcommand{\K}{\mathcal{K}}
\newcommand{\R}{\mathbb{R}}

\newcommand{\diag}{\Delta}

\newcommand{\te}{=_T}
\newcommand{\tq}{\ge_T}

\newcommand{\ctm}{\mathfrak{c}}

\begin{document}
\title{$P$-bases and Topological Groups}
\author[Z. Feng]{Ziqin Feng}
\address{Department of Mathematics and Statistics\\Auburn University\\Auburn, AL~36849}
\email{zzf0006@auburn.edu}
\date{August 2020}
\begin{abstract} A topological space $X$ is defined to have a neighborhood $P$-base at any $x\in X$ from some poset $P$ if there exists a neighborhood base $(U_p[x])_{p\in P}$ at $x$ such that $U_p[x]\subseteq U_{p'}[x]$ for all $p\geq p'$ in $P$.  We prove that a compact space is countable, hence metrizable, if it has countable scattered height and a $\mathcal{K}(M)$-base for some separable metric space $M$. This gives a positive answer to Problem 8.6.8 in \cite{Banakh2019}.

Let $A(X)$ be the free Abelian topological group on $X$. It is shown that if $Y$ is a retract of $X$ such that the free Abelian topological group $A(Y)$ has a $P$-base and $A(X/Y)$ has a $Q$-base, then $A(X)$ has a $P\times Q$-base. Also if $Y$ is a closed subspace of $X$ and $A(X)$ has a $P$-base, then $A(X/Y)$ has a $P$-base.

It is shown that any Fr\'{e}chet-Urysohn topological group with a $\mathcal{K}(M)$-base for some separable metric space $M$ is first-countable, hence metrizable. And if $P$ is a poset with calibre~$(\omega_1, \omega)$ and $G$ is a topological group with a $P$-base, then any precompact subset in G is metrizable, hence $G$ is strictly angelic. Applications in function spaces $C_p(X)$ and $C_k(X)$ are discussed.  We also give an example of a topological Boolean group of character $\leq \mathfrak{d}$ such that the precompact subsets are metrizable but $G$ doesn't have an $\omega^\omega$-base if  $\omega_1<\mathfrak{d}$.  This gives a consistent negative answer to Problem 6.5 in \cite{GKL15}. \end{abstract}

\date{\today}
\keywords{(Scattered) compact spaces, (free Abelian) topological groups, Tukey order, $\omega^\omega$-base, $P$-base, (strictly) angelic, metrizable, quotient groups}

\subjclass[2010]{22A05, 54H11, 46A50}

\maketitle
\section{Introduction}
Let $P$ be a partially ordered set (poset). A topological space $X$ is defined to have a neighborhood $P$-base at any $x\in X$ if there exists a neighborhood base $(U_p[x])_{p\in P}$ at $x$ such that $U_p[x]\subseteq U_{p'}[x]$ for all $p\geq p'$ in $P$. We say that a topological space has a $P$-base if it has a neighborhood $P$-base at each $x\in X$. Thus, a topological group $G$ has a $P$-base if and only if it has a neighborhood $P$-base at the identity. Therefore, the classical metrization theorem of Birkhoff and Kakutani can be restated as `a Hausdorff topological group $G$ is metrizable if and only if $G$ has an $\omega$-base, i.e. is first-countable'. All the topological spaces and groups in this paper are assumed to be Tychonoff.

The concept of $\omega^\omega$-base with the original name `$\mathfrak{G}$-base'  was formally introduced in \cite{FKLS06} for studying (DF)-spaces, function spaces, and spaces in the
 class $\mathfrak{G}$ (see \cite{KKL11} for the definition). Here, $\omega^\omega$ is the family of all sequences of natural numbers in the pointwise partial order, i.e. $f\leq g$ if and only if $f(n)\leq g(n)$ for all $n\in\omega$.   Topological spaces and groups with an $\omega^\omega$-base were defined and studied in  \cite{GKL15}, for a systematic study see \cite{Banakh2019}. Works about topological algebra objects with an $\omega^\omega$-base can also be found in \cite{BL18}, \cite{LPT17}, and \cite{SF20}.

For any $x$ in the topological space $X$, the collection $\mathcal{T}_x(X)$ of open neighborhood of $x$ is a poset in the ordering defined by $U\leq V$ if and only if $U\supseteq V$. Throughout all the note, the local base at any point is considered as a poset defined in this way.  To understand this poset $\mathcal{T}_x(X)$, it is critical to investigate its confinality.  We use Tukey order to compare the cofinal complexity of posets. Tukey order \cite{Tuk40} was originally introduced, early in the 20th century, as a tool to
understand convergence in general topological spaces; however it was quickly seen to have broad
applicability in comparing posets.  Given two directed sets $P$ and $Q$, we say $Q$ is a Tukey quotient of $P$, denoted by $P\geq_T Q$, if there is a map $\phi:P\rightarrow Q$ carrying cofinal subsets of $P$ to cofinal subsets of $Q$.  It is known that a topological space $X$ has a $P$-base if and only if $\mathcal{T}_x(X)\leq_T P$ for each $x\in X$.

This paper is organized as follows. In Section~\ref{scs}, we prove that a compact space is countable, hence metrizable, if it has countable scattered height and a $\mathcal{K}(M)$-base for some separable metric space $M$. Hence this is also true for a compact space with countable scattered height and an $\omega^\omega$-base, which gives a positive answer to Problem 8.6.8 in \cite{Banakh2019}. It is worth mentioning that in \cite{DF20} the authors prove that under the assumption $\omega_1<\mathfrak{b}$, any compact space with an $\omega^\omega$-base is metrizable  and any scattered compact space with an $\omega^\omega$-base is countable. We also prove that $D^2\setminus \Delta$  is strongly dominated (see definition in Section~\ref{scs}) by the Bowtie space $B$, here $D$ is the double arrow space. This gives negative answer to Questions 4.4-4.13 in \cite{GTk}.

 For a Tychonoff space $X$, let $A(X)$ be the free Abelian topological group over $X$, in the sense that there is a continuous mapping $\sigma$ from $X$ to $A(X)$ such that: i) $\sigma(X)$ topologically generates the topological group $A(X)$; and ii), for every continuous mapping $f$ from $X$ to an Abelian group $H$, there is a continuous homomorphism $\Tilde{f}$ from $A(X)$ to $H$ such that $\Tilde{f}\circ \sigma=f$.  It is proved in \cite{Gar20} and independently in \cite{BL18} that $A(X)$ has an $\omega^\omega$-base if and only if the universal uniformity $\mathcal{U}_X$ of $X$ has an $\omega^\omega$-ordered base. In Section~\ref{fag} we study the $P$-bases of free Abelian topological groups through quotient spaces and quotient groups. We prove that if $Y$ is a retract of $X$ such that $A(Y)$ has a $P$-base and $A(X/Y)$ has a $Q$-base, then $A(X)$ has a $P\times Q$-base. We also prove that if $Y$ is a closed subspace of $X$ and $A(X)$ has a $P$-base, then $A(X/Y)$ has a $P$-base.

 In Section~\ref{tg_sa_m} we show that any Fr\'{e}chet-Urysohn topological group with a $\mathcal{K}(M)$-base for some separable metric space $M$ is first-countable, hence metrizable. Also, we prove that if $P$ is a poset with calibre~$(\omega_1, \omega)$ and $G$ is a topological group with a $P$-base, then any precompact subset in G is metrizable, hence, $G$ is strictly angelic. In this section we apply our results to function spaces $C_p(X)$ and $C_k(X)$. We also give an example of a topological Boolean group of character $\leq \mathfrak{d}$ such that the precompact subsets are metrizable and $G$ doesn't have an $\omega^\omega$-base if $\omega_1<\mathfrak{d}$. This gives a consistent negative answer to Problem 6.5 in \cite{GKL15}.

  \section{Preliminaries}

 Given that $P$ and $Q$ are both Dedekind complete (every bounded subset has a least upper bound),  $P\geq_T Q$ if and only if there is a map $\phi: P\rightarrow Q$ which is order-preserving and such that $\phi(P)$ is cofinal in $Q$.  Throughout the note, all the posets are Dedekind complete unless otherwise stated. If $P$ and $Q$ are mutually Tukey quotients, we say that $P$ and $Q$ are Tukey equivalent, denoted by $P=_T Q$.

For any separable metric space $M$, $\mathcal{K}(M)$ is the collection of compact subsets of $M$ ordered by set-inclusion. Fremlin observed that if a separable metric space $M$ is locally compact, then $\mathcal{K}(M)=_T\omega$. Its unique successor under Tukey order is the class of Polish but not locally compact spaces. For $M$ in this class, $\mathcal{K}(M)=_T\omega^\omega$ where $\omega^\omega$ is ordered by $f\leq g$ if $f(n)\leq g(n)$ for each $n\in \omega$. In \cite{GM16}, Gartside and Mamataleshvili constructed a $2^{\mathfrak{c}}$-sized antichain in $\mathcal{K}(\mathcal{M})=\{\mathcal{K}(M): M\in \mathcal{M}\}$ where $\mathcal{M}$ is the class of separable metric spaces. 

Let $P$ be a directed poset, i.e., for any points $p, p'\in P$, there exists a point $q\in P$ such that $p\leq q$ and $p'\leq q$.  A subset $C$ of $P$ is \emph{cofinal} in $P$ if for any $p\in P$, there exists a $q\in C$ such that $p\leq q$. Then $\text{cof}(P)=\min\{|C|: C\text{ is cofinal in }P\}$. We also define $\text{add}(P)=\min\{|Q|: Q \text{ is unbounded in }P\}$. For any $f, g\in \omega^\omega$, we say that $f\leq^\ast g$ if the set $\{n\in\omega:f(n)>g(n) \}$ is finite. Then $\mathfrak{b}=\text{add}(\omega^\omega, \leq^\ast)$ and $\mathfrak{d}=\text{cof}(\omega^\omega, \leq^\ast)$. See \cite{Douwen84} for more information about small cardinals.

Let $\kappa\geq \mu\geq \lambda$ be cardinals. We say that a poset $P$ has \emph{calibre~$(\kappa, \mu, \lambda)$} if for every $\kappa$-sized subset $S$ of $P$ there is a $\mu$-sized subset $S_0$ such that every $\lambda$-sized subset of $S_0$ has an upper bound in $P$. We write calibre~$(\kappa, \mu, \mu)$ as calibre~$(\kappa, \mu)$ and calibre~$(\kappa, \kappa, \kappa)$ as calibre~$\kappa$. It is known that $\mathcal{K}(M)$ has calibre~$(\omega_1, \omega)$ for any separable metric space $M$, hence so does $\omega^\omega$. And if $P$ has calibre~$(\omega_1, \omega) $ and $Q$ is a Turkey quotient of $P$, then $Q$ also has calibre~$(\omega_1, \omega)$. Under the assumption $\omega_1=\mathfrak{b}$, $\omega_1$ is a Tukey quotient of $\omega^\omega$ (see \cite{AMETD} for the discussion of the spectrum of $\omega^\omega$), hence $\omega_1+1$ has an $\omega^\omega$-base. However, when $\omega_1<\mathfrak{b}$, $\omega_1$ is not a Tukey quotient of $\omega^\omega$, hence the space $\omega_1+1$ doesn't have an $\omega^\omega$-base, but it has a $\mathcal{K}(\mathbb{Q})$-base (see \cite{DF20}), where $\mathbb{Q}$ is the space of rational numbers.

We start with some basic properties of the property calibre $(\omega_1, \omega)$ and $P$-bases. For any two posets $(P, \leq_P)$ and $(Q, \leq_Q)$, the ordering on $P\times Q$ is naturally defined as follows: $(p, q)\leq (p', q')$ if and only if $p\leq_P p'$ and $q\leq_Q q'$. For $i=1, 2$, let $\pi_i$ be the projection mapping from $P\times Q$ to $P$ or $Q$ respectively.

\begin{prop}\label{cw1w} If $P$ is a poset with calibre~$(\omega_1, \omega)$ and $Q$ is a countable poset, then $P\times Q$ also has calibre~$(\omega_1, \omega)$.  \end{prop}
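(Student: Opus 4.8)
The plan is to reduce the statement to the calibre hypothesis on $P$ by concentrating an uncountable subset into a single $Q$-fibre. First I would unwind what has to be checked. Given an $\omega_1$-sized subset $S\subseteq P\times Q$, calibre~$(\omega_1,\omega)=$ calibre~$(\omega_1,\omega,\omega)$ asks for an $\omega$-sized $S_0\subseteq S$ such that every $\omega$-sized subset of $S_0$ has an upper bound in $P\times Q$. It therefore suffices to produce a countably infinite $S_0\subseteq S$ possessing a single upper bound $(b,q^\ast)\in P\times Q$, since then every subset of $S_0$ is dominated by $(b,q^\ast)$, in particular every $\omega$-sized one.

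Next I would exploit the countability of $Q$. For each $q\in Q$ put $S_q=\{(p,q')\in S:q'=q\}$, so that $S=\bigcup_{q\in Q}S_q$. Because $Q$ is countable and a countable union of countable sets is countable, while $|S|=\omega_1$, there must be some $q^\ast\in Q$ with $|S_{q^\ast}|=\omega_1$. On this fibre the projection $\pi_1$ is injective (all second coordinates equal $q^\ast$), so $T:=\pi_1(S_{q^\ast})\subseteq P$ again has size $\omega_1$.

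Now I would apply the hypothesis on $P$. Since $P$ has calibre~$(\omega_1,\omega)$, the uncountable set $T$ admits a countably infinite subset $T_0$ such that every $\omega$-sized subset of $T_0$ has an upper bound in $P$; taking the $\omega$-sized subset to be $T_0$ itself yields an upper bound $b\in P$ of $T_0$. Setting $S_0:=T_0\times\{q^\ast\}\subseteq S_{q^\ast}\subseteq S$, which is countably infinite, I claim $(b,q^\ast)$ is an upper bound for $S_0$: for each $p\in T_0$ we have $p\le_P b$ and $q^\ast\le_Q q^\ast$, whence $(p,q^\ast)\le(b,q^\ast)$ in the product order. This exhibits the required $S_0$ and establishes that $P\times Q$ has calibre~$(\omega_1,\omega)$.

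I do not anticipate a serious obstacle here; the entire argument is driven by the single observation that the countability of $Q$ lets the pigeonhole principle trap an uncountable subset inside one fibre, after which the calibre of $P$ does all the work. The only points demanding a little care are verifying that $\pi_1$ remains injective on a fixed fibre—so that uncountability genuinely transfers to $P$—and that a single bound $b$ in $P$ lifts to an honest bound $(b,q^\ast)$ in $P\times Q$, both of which are immediate from the definition of the product order.
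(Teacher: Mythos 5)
Your proof is correct and follows essentially the same route as the paper's: pigeonhole the $\omega_1$-sized set into a single fibre over some $q^\ast\in Q$, apply calibre~$(\omega_1,\omega)$ of $P$ to the first coordinates, and lift the resulting bound to $(b,q^\ast)$ in the product order. Your write-up is somewhat more explicit than the paper's (notably in observing that a countable set that is itself bounded witnesses the calibre condition, and that $\pi_1$ is injective on a fibre), but the underlying argument is identical.
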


\begin{proof} Let $S$ be an $\omega_1$-sized subset of $P\times Q$. Then there is a $q\in Q$ and an $\omega_1$-sized subset $S'$ of $S$ such that $\pi_2(x)=q$ for all $x\in S'$. Since $P$ has calibre~$(\omega_1, \omega)$, $\{\pi_1(x): x\in S'\}$ is an $\omega_1$-sized subset of $P$, hence it has a countable subset bounded in $P$. Therefore, there is a countable subset of $S$ which is bounded in $P\times Q$. \end{proof}

\begin{prop}\label{op_cont} Suppose that $X$ has a $P$-base for some poset $P$. If there is an open continuous mapping $f$ from $X$ onto $Y$, then $Y$ has a $P$-base.
\end{prop}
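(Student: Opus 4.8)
The plan is to transport the neighborhood $P$-base from a point of $X$ to its image under $f$, letting openness guarantee that the transported sets are neighborhoods and letting continuity guarantee that they form a base. Concretely, fix $y\in Y$ and, using surjectivity of $f$, choose $x\in X$ with $f(x)=y$. Let $(U_p[x])_{p\in P}$ be a neighborhood $P$-base at $x$, and set $V_p[y]:=f(U_p[x])$ for each $p\in P$. The claim is that $(V_p[y])_{p\in P}$ is a neighborhood $P$-base at $y$, and the proof splits into three short verifications.

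First I would check that each $V_p[y]$ is a neighborhood of $y$. Since $U_p[x]$ is a neighborhood of $x$, its interior is an open set containing $x$; because $f$ is open, the image of this interior is open in $Y$, contains $y=f(x)$, and is contained in $V_p[y]$. Hence $V_p[y]$ is a neighborhood of $y$. Monotonicity is then immediate: if $p\geq p'$ in $P$ then $U_p[x]\subseteq U_{p'}[x]$, and applying $f$ gives $V_p[y]\subseteq V_{p'}[y]$.

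The final step is the base property, and this is where continuity enters. Given any open neighborhood $W$ of $y$, the set $f^{-1}(W)$ is an open neighborhood of $x$ by continuity of $f$, so there is some $p\in P$ with $U_p[x]\subseteq f^{-1}(W)$; then $V_p[y]=f(U_p[x])\subseteq f(f^{-1}(W))\subseteq W$. Thus every neighborhood of $y$ contains some $V_p[y]$, which together with the first two steps shows $(V_p[y])_{p\in P}$ is a neighborhood $P$-base at $y$. Since $y\in Y$ was arbitrary, $Y$ has a $P$-base.

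I do not anticipate a genuine obstacle here: the argument is essentially forced once the two roles of the hypotheses are separated, openness being exactly what keeps the images neighborhoods and continuity being exactly what makes them shrink to a base. The only point requiring care is to avoid conflating \emph{neighborhood} with \emph{open set}, i.e.\ to pass to interiors when invoking openness of $f$; aside from that the verification is routine.
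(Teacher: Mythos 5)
Your proof is correct and follows essentially the same argument as the paper: push forward a neighborhood $P$-base at a preimage point via $f$, using openness for the neighborhood property and continuity (via $f^{-1}$ of a given neighborhood) for the base property. The only difference is cosmetic --- you carefully pass to interiors to handle non-open neighborhoods and spell out monotonicity, steps the paper treats as immediate.
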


\begin{proof} Fix $y\in Y$. Choose $x\in f^{-1}(y)$. Then there is a local $P$-base $\{U_p[x]: p\in P\}$ at $x$. We claim that $\{f(U_p[x]):p\in P\}$ is a local $P$-base at $y$. Clearly $f(U_p[x])$ is an open neighborhood of $y$ since $f$ is open.  For any open neighborhood $V$ of $y$, $f^{-1}(V)$ is an open neighborhood of $x$. Therefore, there is a $p\in P$ such that $U_p[x]\subseteq f^{-1}(V)$, i.e., $f(U_p[x])\subseteq V$.  \end{proof}

\section{(Scattered) Compact Spaces}\label{scs}

We recall that a topological space $X$ is scattered if each non-empty subspace of $X$ has an isolated point. The complexity of a scattered space can be determined by the scattered height.

For any subspace $A$ of a space $X$, let $A'$ be the set of all non-isolated
points of $A$. It is straightforward to see that $A'$ is a closed subset of $A$. Let $X^{(0)} = X$ and define $X^{(\alpha)} = \bigcap_{\beta<\alpha} (X^{(\beta)} ) '$ for each $\alpha > 0$. Then a space $X$ is  scattered if $X^{(\alpha)} = \emptyset$ for some ordinal $\alpha$. If $X$ is scattered, there exists a unique ordinal $h(x)$ such that $x\in X^{(h(x))}\setminus X^{(h(x)+1)}$ for each $x\in X$. The ordinal $h(X)=\sup\{h(x): x\in X\}$  is called the scattered
height of $X$ and is denoted by $h(X)$. It is known that any compact scattered space is zero-dimensional. Also, it is straightforward to show that for any compact scattered space $X$, $X^{(h(X))}$ is a non-empty finite subset.

Let $X$ be a compact space. If $X$ has an $\omega^\omega$-base and finite scattered height, then $X$ is proved in \cite{Banakh2019} to be countable, hence metrizable. It is proved in \cite{DF20} that the same result also holds if $X$ has finite scattered height and a $P$-base for some poset $P$ with calibre~$(\omega_1, \omega)$. The compact space $\omega_1+1$ has an $\omega^\omega$-base under the assumption $\omega_1<\mathfrak{b}$. The authors in \cite{DF20} proved that if $\omega_1<\mathfrak{b}$ any compact scattered space is countable, hence metrizable. Hence it is natural to ask, as in \cite{Banakh2019}, whether  a compact space $X$ is metrizable in ZFC if it has an $\omega^\omega$-base and {\em countable} scattered height. Next, we give a positive answer to this question by proving a more general result.

\begin{prop}  Any compact space is countable, hence metrizable, if it has a countable scattered height and a $\mathcal{K}(M)$-base, where $M$ is a separable metric space.  \end{prop}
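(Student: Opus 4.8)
The plan is to argue by transfinite induction on the countable scattered height $\alpha=h(X)$, proving the sharper statement that every compact space with a $\mathcal{K}(M)$-base and scattered height $\le\alpha$ is countable. Two routine reductions come first. Since the trace of a neighbourhood $P$-base on a subspace is again a neighbourhood $P$-base, every closed subspace of $X$, in particular every derived set $X^{(\beta)}$, again has a $\mathcal{K}(M)$-base. And since a compact scattered space is zero-dimensional and $X^{(\alpha)}$ is finite, I would choose pairwise disjoint clopen sets $C_{x}\ni x$, one around each $x\in X^{(\alpha)}$, together with the clopen remainder $Y=X\setminus\bigcup_{x}C_{x}$. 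As $Y$ is clopen it satisfies $Y^{(\beta)}=Y\cap X^{(\beta)}$, so $Y^{(\alpha)}=\emptyset$ and $h(Y)<\alpha$; thus $Y$ is countable by the inductive hypothesis. Each $C_{x}$ is clopen, has a $\mathcal{K}(M)$-base, and meets $X^{(\alpha)}$ in the single point $x$. Hence it suffices to treat the case that $X$ has a unique point $x^{*}$ of maximal rank $\alpha$.

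In this situation the aim is to show that $x^{*}$ is a $G_\delta$-point, equivalently (by compactness) that $x^{*}$ has countable character. Granting this, pick clopen neighbourhoods $V_n\ni x^{*}$ with $\bigcap_n V_n=\{x^{*}\}$; then each $X\setminus V_n$ is clopen, compact, misses $x^{*}$, and so has scattered height $<\alpha$, whence it is countable by the inductive hypothesis. Since $X\setminus\{x^{*}\}=\bigcup_n(X\setminus V_n)$, the space $X$ is countable, and a countable compact space is metrizable.

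The calibre $(\omega_1,\omega)$ of $\mathcal{K}(M)$ drives the successor steps. When $\alpha$ is a successor the derived set $X'$ has $h(X')<\alpha$, so by induction $X'$ is a countable compact, hence metrizable, space; in particular $x^{*}$ is $G_\delta$ in $X'$, which lets me pick clopen $V_n$ with $\bigcap_n V_n\cap X'=\{x^{*}\}$. The remaining isolated points of $X$ lying in $\bigcap_n V_n$ then accumulate only to $x^{*}$ and form a set $S$ on which the neighbourhood filter of $x^{*}$ traces the cofinite filter, giving a Tukey quotient $\mathcal{T}_{x^{*}}(X)\ge_T[S]^{<\omega}$. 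Because $\mathcal{T}_{x^{*}}(X)\leq_T\mathcal{K}(M)$ has calibre $(\omega_1,\omega)$ and this property passes to Tukey quotients, $[S]^{<\omega}$ has calibre $(\omega_1,\omega)$ (an uncountable $S$ would give uncountably many singletons no infinite subfamily of which is bounded), forcing $S$ to be countable; assembling this with the induction over the finitely many derived levels yields countability and the required $G_\delta$ property.

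The hard part is the limit case, and more precisely the case of an additively indecomposable rank $\alpha$ (e.g. $\alpha=\omega$), where passing to any proper derived set leaves the rank of $x^{*}$ unchanged, so the rank induction stalls. Here mere calibre $(\omega_1,\omega)$ is genuinely insufficient, since $\omega_1+1$ carries a $\mathcal{K}(\mathbb{Q})$-base yet is uncountable, so one must exploit the concrete structure of the base. I would represent the neighbourhood base at $x^{*}$ monotonically by the compact subsets of $M$, fix a countable base of the separable metric space $M$, and run a diagonal/fusion argument across the countably many lower levels $X^{(\beta)}$ with $\beta<\alpha$: at each level the $[S]^{<\omega}$ trick extracts a countable separating family, and the countability of $\alpha$ should let these amalgamate into a single countable neighbourhood base at $x^{*}$, establishing the $G_\delta$ property and closing the induction. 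Controlling the interaction between the compact-set indexing of the base and the accumulation of points across infinitely many levels is the main obstacle I expect to confront.
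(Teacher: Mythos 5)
Your proposal has the right overall frame (induction on the height, reduction to a single point $x^{*}$ of top rank, inheritance of the $\mathcal{K}(M)$-base by closed subspaces, and the calibre argument showing $[S]^{<\omega}\leq_T\mathcal{K}(M)$ forces $S$ to be countable), but it has a genuine gap exactly where the whole difficulty of the proposition sits: the additively indecomposable limit case is never proved. What you offer there is a plan (``a diagonal/fusion argument \dots should let these amalgamate into a single countable neighbourhood base''), together with an explicit admission that controlling the interaction between the compact-set indexing and infinitely many levels is an obstacle you have not overcome. That obstacle is the theorem you would need. The paper does not attempt to build a countable base at $x^{*}$ directly. Instead it proves by induction that $X$ has \emph{countable tightness} --- at points of rank $<\alpha$ via countable clopen neighbourhoods, and at $x^{*}$ by picking points $x_n\in X^{(\gamma_n)}\cap(\overline{B}\setminus\{x^{*}\})$ with $\gamma_n$ cofinal in $\alpha$, noting that compactness forces $x^{*}$ to be a cluster point of $\{x_n:n\in\omega\}$ (any cluster point has rank $\geq\sup_n\gamma_n=\alpha$), and amalgamating countable sets $C_n\subseteq B$ with $x_n\in\overline{C_n}$ --- and then invokes the result of \cite{DF20} that a compact space with countable tightness and a $\mathcal{K}(M)$-base is first-countable. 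Countable tightness is far easier to establish than first countability precisely because it survives the indecomposable limit case; your fusion sketch is in effect an attempt to reprove the cited theorem, and without carrying it out (or citing it) the induction cannot close.

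There is also a concrete error in your successor case: the claim $h(X')<\alpha$ holds only for finite $\alpha$. For infinite $\alpha$ one has $1+\alpha=\alpha$, so the rank of $x^{*}$ in the first derived set $X'$ is still $\alpha$ and $h(X')=\alpha$ (e.g.\ $\alpha=\omega+1$); your rank induction does not advance. The repair is to pass, for $\alpha=\alpha_-+1$, to $X^{(\alpha_-)}$, which has height $1$ and is countable by the inductive hypothesis; but then the points of $\bigcap_n V_n\setminus\{x^{*}\}$ are no longer isolated, so the $[S]^{<\omega}$ step must be replaced --- for instance by observing that $\bigl(\bigcap_n V_n\bigr)^{(\alpha_-)}\subseteq\{x^{*}\}$, so $\bigcap_n V_n$ is a compact space of height $\leq\alpha_-$ and is countable by induction, as is each $X\setminus V_n$. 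This repaired argument in fact disposes of every additively decomposable $\alpha$ (successor or limit), which confirms that the only case your proposal leaves standing, the indecomposable limit case, is also the only one it does not actually treat.
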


\begin{proof} Let $X$ be a compact space with countable scattered height $\alpha$ and a $\mathcal{K}(M)$-base for some separable metric space $X$.  We'll prove by induction on $\alpha$.

Assume any compact space with scattered height $\beta$ is countable for any $\beta<\alpha$. Without loss of generality, we assume that $X^{(\alpha)}$ is a singleton, denoted by $x^\ast$. For any open neighborhood $U$ of $x^\ast$, $X\setminus U$ is a scattered compact space with scattered height $<\alpha$, hence is countable. So it is sufficient to prove that $X$ is first-countable at $x^\ast$.

In \cite{DF20}, the authors prove that any compact space with countable tightness and a $\mathcal{K}(M)$-base is first-countable where $M$ is a separable metric space. Hence it suffices to prove that $X$ has countable tightness. For each $x\in X\setminus \{x^\ast\}$ there is a clopen neighborhood $U_x $ of $x$ such that $U_x\cap X^{(h(x))}=\{x\}$ since any scattered compact space is zero-dimensional. Clearly $U_x$ is a compact space with scattered height $<\alpha$. By the inductive assumption $U_x$ is countable. So $X$ has countable tightness at any $x$ in $X\setminus \{x^\ast\}$.

Next, we prove that $X$ has countable tightness at $x^\ast$. Let $B$ a subset of $X\setminus \{x^\ast\}$ with $x^\ast\in \overline{B}$.  Let $\beta_\ast=\sup\{\gamma: X^{(\gamma)}\cap (\overline{B}\setminus \{x^\ast\})\neq\emptyset \text{ and }\gamma<\alpha\}$. We will show there is a countable subset $C\subseteq B$ such that $x^{\ast}\in \overline{C}$ in two cases: 1) $\alpha$ is a limit ordinal; and 2) $\alpha$ is a successor.

Firstly, we assume that $\alpha$ is a limit ordinal. If $\beta_\ast<\alpha$, then $\overline{B}$ is a compact space with scattered height  $\leq \beta_\ast+1$, hence it is countable. Then $C=B$ works. Suppose $\beta_\ast=\alpha$. Then there exists a confinal sequence $\gamma_n$ in $\alpha$ such that $X^{(\gamma_n)}\cap (\overline{B}\setminus \{x^\ast\})\neq\emptyset$. For each $n$, pick $x_n\in  X^{(\gamma_n)}\cap (\overline{B}\setminus \{x^\ast\})$. Since $X$ is compact, $x^\ast$ is in the closure of $\{x_n: n\in \omega\}$. For each $n\in\omega$, let $C_n$ be a countable subset of $B$ such that $x_n\in \overline{C_n}$. Therefore $C=\bigcup\{C_n: n\in \omega\}$ works.

From now on, we assume that $\alpha$ is a successor and let $\alpha_-$ be the ordinal such that $\alpha=\alpha_-+1$. Note that $X^{(\alpha_-)}$ is a compact space with scattered height $1$, hence it is countable. If $\beta_\ast<\alpha_-$, then the closure of $B$ is a compact space with scattered height $<\alpha$. Hence $\overline{B}$ is countable, so is $B$. Then let  $C=B$ which is clearly countable.

Suppose $\beta_\ast=\alpha_-$. If $X^{(\alpha_-)}\cap \overline{B}$ is finite, choose a clopen neighborhoood $U$ of $x^\ast$ such that  $U\cap ( X^{(\alpha_-)}\cap \overline{B})= \{x^\ast\}$. Then $U\cap \overline{B}$ is a compact subspace with scattered height $<\alpha$, hence countable. Therefore, $C=U\cap  B$ works. If $X^{(\alpha_-)}\cap \overline{B}$ is infinite, list it as $\{x_n: n\in \omega\}$; clearly $x^\ast\in \overline{\{x_n: n\in \omega\}}$ since $X$ is compact. By the inductive assumption, we can pick a countable subset $C_n$ of $B$ such that $x_n$ is in the closure of $C_n$. Then $C=\bigcup C_n$ is a countable subset of $B$ with $x^\ast$ in the closure of $C$. This finishes the proof.       \end{proof}

The corollary below answers Problem 8.6.8 in \cite{Banakh2019} affirmatively.

\begin{cor} A compact space is countable (hence metrizable) if and only if  it has countable scattered height and an $\omega^\omega$-base. \end{cor}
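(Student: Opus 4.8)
The plan is to deduce both implications of the biconditional from results already in hand, treating the two directions separately. I expect the direction ``countable scattered height and $\omega^\omega$-base $\Rightarrow$ countable'' to carry all the content, via the preceding Proposition, while the converse is routine.

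For that substantive direction, the key observation is that $\omega^\omega$ is Tukey equivalent to $\mathcal{K}(M)$ for a suitable separable metric space $M$. Taking $M$ to be a Polish but not locally compact space, for instance the space of irrationals $\mathbb{P}$, the discussion in the Preliminaries gives $\mathcal{K}(\mathbb{P}) =_T \omega^\omega$. Since having a $P$-base depends only on the Tukey type of $P$ --- recall $X$ has a $P$-base if and only if $\mathcal{T}_x(X) \leq_T P$ for each $x$, so $P =_T Q$ forces ``$P$-base'' and ``$Q$-base'' to coincide --- a compact space with an $\omega^\omega$-base has a $\mathcal{K}(\mathbb{P})$-base. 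Combined with the hypothesis of countable scattered height, the preceding Proposition (with $M = \mathbb{P}$) then yields that $X$ is countable, hence metrizable.

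For the converse, suppose $X$ is a countable compact (Hausdorff) space. I would first check that $X$ is scattered with countable scattered height: if $X$ were not scattered its perfect kernel $\bigcap_{\alpha} X^{(\alpha)}$ would be a nonempty closed, hence compact, crowded subspace, which is uncountable --- contradicting countability. Thus $X$ is scattered, and the layers $X^{(\alpha)}\setminus X^{(\alpha+1)}$ are pairwise disjoint and nonempty for each $\alpha < h(X)$; as $X$ is countable there are only countably many such layers, so $h(X) < \omega_1$. To produce the $\omega^\omega$-base, I would note that $X$, being a compact Hausdorff space with a countable network (its singletons), is metrizable, hence first countable; so $X$ has an $\omega$-base, and since $\omega \leq_T \omega^\omega$ it therefore has an $\omega^\omega$-base.

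I do not anticipate a genuine obstacle here, since the real work is carried by the Proposition. The only points requiring care are the passage between $\omega^\omega$-bases and $\mathcal{K}(M)$-bases through the Tukey equivalence $\mathcal{K}(\mathbb{P}) =_T \omega^\omega$, and the standard (but worth recording) fact that a countable compact Hausdorff space is metrizable.
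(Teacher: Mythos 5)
Your proposal is correct and matches the paper's intended derivation: the paper states this corollary without proof, as an immediate consequence of the preceding Proposition, relying precisely on the Tukey equivalence $\mathcal{K}(M)=_T\omega^\omega$ for a Polish, non--locally compact $M$ (e.g.\ the irrationals) noted in the Preliminaries. Your verification of the routine converse (a countable compact Hausdorff space is scattered of countable height and metrizable, hence has an $\omega^\omega$-base) is a sound filling-in of what the paper leaves implicit.
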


We don't know the answer to the following question. The answer is proved to be positive in \cite{DF20} if the space has a finite scattered height.

\begin{qu} Let $P$ be a poset with cablire~$(\omega_1, \omega)$ and  $X$ a compact space with a $P$-base. Is $X$ countable if $X$ has a countable scattered height? \end{qu}

We say a space $Z$ is dominated by $Y$ if there exists a family $\mathcal{D}=\{D_K: D_K \text{ is a compact subset of }Z \text{ and } K\text{ is a compact subset of }Y\}$ such that $Z=\bigcup \mathcal{D}$ and $D_K\subset D_L$ if $K\subset L$. Furthermore, $Z$ is said to be strongly dominated by $Y$ if the family $\mathcal{D}$ is also confial in $\mathcal{K}(Z)$. It is shown  that any compact space $X$ with $X^2\setminus \Delta$ being dominated by $\mathbb{Q}$ is metrizable in \cite{Feng2019} and the same result holds for a compact space $X$ with $X^2\setminus \Delta$ being dominated by $M$ for some separable metric space $M$, see \cite{San2020}, here $\Delta=\{(x,x): x\in X\}$. Questions 4.4-4.13 in \cite{GTk} ask whether a compact space $X$ is metrizable if $X^2\setminus \Delta$ is dominated by other topological space. The following example answers these questions negatively.

\begin{ex}\label{ex_dom} \textnormal{\textbf{(Gartside)}}
Let $B$ denote the Bowtie space.

Then $\K (B) \te [\ctm]^{<\omega}$. Hence for every compact space, $X$, of weight $\le \ctm$, we have $\K(B) \tq \K(X^2 \setminus \diag)$.

In particular, there is a $\sigma$-compact cosmic space, $B$, which strongly dominates  $D^2  \setminus \diag$, where $D$ is the Double Arrow space -- a compact, first countable but non-metrizable space.
\end{ex}

\begin{proof}
Let $B$ be the standard Bowtie space obtained by refining the Euclidean topology  on $\R^2$ by declaring sets of the form  $B(x, s)=\{(x, 0)\}\cup \{(x', y): |y|<s|x-x'|\}$ to be open for all $x, s\in \mathbb{R}$ with $s > 0$.



Let $K_0=\{(a,b) : a \in [-1,1], \ b =a^2\}$. Observe that $K_0$ is a compact subset of $B$. For each $x$ in $\R$, let $K_x$ be $K_0$ translated to $(x,0)$, i.e., $K_x=\{(x, y): (x, y)=(x,0)+(a, b) \text{ for some }(a, b)\in K_0\}$.

Then $\mathcal{C} =\{K_x : x \in \R\}$ has size $\ctm$ but no infinite subfamily has an upper bound in $\K(B)$, i.e., has union whose closure is compact.
To see this, suppose for a contradiction that infinite family of $K_x$'s is contained in a compact set, say $K_\infty$. Note that the set of corresponding $x$'s in $\R$ is infinite and has compact closure.
Passing to a subset we may suppose that we have an infinite family of $K_{x_n}$'s where the $x_n$'s are (without loss of generality) an increasing sequence in $\R$ converging to some $x_\infty$.
But now, for each $n$, $K_{x_n}$ meets the line $\{x_\infty\} \times \R$ in exactly one point, say $(x_\infty,y_n)$.
And now we see that $\{ (x_\infty,y_n) : n \in \mathbb{N}\}$ is an infinite closed discrete subset of compact $K_\infty$ -- contradiction.

Hence $\K(B)$ does not have calibre $(\ctm, \omega)$. Then $\K(B) \tq [\ctm]^{<\omega}$ by \cite{AMETD}*{Lemma 11}. As $\K(B)$ has size $\ctm$, clearly $\K(B)\te [\ctm]^{<\omega}$.

Let $X$ be a compact space with  $w(X) \le \ctm$.  Then $\K(X^2 \setminus \diag)$ has cofinality no more than $\ctm$ (because it is order isomorphic to the neighborhoods of the diagonal, and a cofinal family of these comes from finite covers by basic open sets). Then by \cite{AMETD}*{Lemma 6}, $[\ctm]^{<\omega}\geq_T \K(X^2 \setminus \diag)$, hence $\K(B)\geq_T \K(X^2 \setminus \diag)$. Therefore $X^2 \setminus \diag$ is strongly dominated by the Bowtie space $B$.
\end{proof}

\section{Free Abelian Topological Groups}\label{fag}
A property $\mathcal{P}$ is said to be a {\em three-space property} if for every topological group $G$ and  a closed normal subgroup $H$ of $G$ both $H$ and $G/H$ having $\mathcal{P}$ implies that $G$ has $\mathcal{P}$. It is a classical result that having an $\omega$-base is a three-space property (see \cite{ArhTka2008}). The authors ask in \cite{GKL15} whether having an $\omega^\omega$-base is a three-space property. We will start with a sufficient condition on $(G, H)$ which guarantees that $G$ has an $\omega^\omega$-base if both $H$ and $G/H$ have an $\omega^\omega$-base.

\begin{df}Let $G$ be a topological group and $H$ a closed normal subgroup of $G$. We say that $(G, H)$ is a good pair if there is a mapping $p$ from a local base $\mathcal{B}$ at the identity in  $H$ to the topology on $G$ such that:
\begin{itemize}
    \item[i)] $p(B)\cap H=B$ for any $B\in\mathcal{B}$;
    \item[ii)] for any $B, B'\in \mathcal{B}$, $p(B)^2\subseteq p(B')$ if $B^2\subseteq B'$.
    \item[iii)] for any $B, B'\in \mathcal{B}$, $p(B)^{-1}\subseteq p(B')$ if $B^{-1}\subseteq B'$.
\end{itemize}

\end{df}

Our next result shows that if $H$ is a metrizable closed normal subgroup of $G$, then $(G, H)$ is a good pair. The proof is straightforward.

\begin{prop}\label{mgp}Let $G$ be a topological group with the identity $e$. If $H$ is a closed normal subgroup of $G$ such that $\mathcal{T}_H(e)\leq_T\omega$, then $(G, H)$ is a good pair. \end{prop}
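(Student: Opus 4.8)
The plan is to sidestep the Tukey hypothesis by converting it into a concrete metric description of $H$, and then to produce the map $p$ not by extending a base of $H$ ``upward'' into $G$, but by first building a good descending chain of neighborhoods inside $G$ and only afterwards reading off the base of $H$ as its trace. First I would note that $\mathcal{T}_H(e)\leq_T\omega$ says exactly that $H$ has a countable neighborhood base at $e$, so $H$ is first countable and hence, by Birkhoff--Kakutani, metrizable by a two-sided invariant metric; in particular there is a decreasing sequence of symmetric open neighborhoods of $e$ in $H$ witnessing first countability.

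Rather than extending such a base, I would construct a descending sequence $(W_n)_{n\in\omega}$ of symmetric open neighborhoods of $e$ in $G$ directly, by recursion, so that (a) $W_{n+1}^2\subseteq W_n$, obtained from continuity of multiplication and inversion in $G$; and (b) $W_{n+1}\cap H$ is contained in the $(n{+}1)$-st term of a fixed base of $H$, obtained by intersecting with a $G$-open set whose trace on $H$ is small. Property (b) guarantees that the traces $B_n:=W_n\cap H$ form a neighborhood base at $e$ in $H$, while (a) gives $B_{n+1}^2\subseteq W_{n+1}^2\cap H\subseteq W_n\cap H=B_n$. I would further arrange at each stage that no $B_n$ is a subgroup, i.e.\ that $B_n^2\not\subseteq B_n$; when $H$ is non-discrete this can be forced by deleting a suitable pair $y,y^{-1}\in H$ from $W_{n+1}$, which keeps it open and symmetric and only shrinks its square. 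Setting $\mathcal{B}=\{B_n:n\in\omega\}$ and $p(B_n)=W_n$ then gives a well-defined map from a base of $H$ into the topology of $G$, with condition i) holding by the very definition $B_n=W_n\cap H$.

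Conditions ii) and iii) are then checked by transferring index relations. For iii), symmetry of the $W_n$ gives $W_n^{-1}=W_n$ and $B_n^{-1}=B_n$, so $B_n^{-1}\subseteq B_m$ reduces to $B_n\subseteq B_m$, i.e.\ $m\leq n$, and then $W_n\subseteq W_m$ because the chain is decreasing. For ii) the key observation is that since $e\in B_n$ we have $B_n\subseteq B_n^2$, so any relation $B_n^2\subseteq B_m$ already forces $B_n\subseteq B_m$ and hence $m\leq n$; the normalization $B_n^2\not\subseteq B_n$ rules out $m=n$, leaving $m\leq n-1$, whence $W_n^2\subseteq W_{n-1}\subseteq W_m$ by the chain condition $W_{n}^2\subseteq W_{n-1}$ together with the fact that $(W_n)$ is decreasing.

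The step I expect to be the main obstacle is precisely condition ii): a priori a square-containment $B^2\subseteq B'$ that happens to hold inside $H$ --- most dangerously when small neighborhoods of $H$ are nearly subgroups, so that $B^2$ fails to grow --- need not be reflected by the corresponding open sets in the ambient group $G$, where squares genuinely expand. The entire design above, namely building the chain $(W_n)$ inside $G$ with $W_{n+1}^2\subseteq W_n$ and normalizing the base of $H$ so that no base element is a subgroup, is aimed at ensuring that the only square-containments occurring in $H$ are exactly those already guaranteed by the chain condition to hold in $G$. Note that normality of $H$ is not used in the construction itself; it is the metrizability, equivalently the Tukey bound $\mathcal{T}_H(e)\leq_T\omega$, that does all the work.
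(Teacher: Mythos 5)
Your construction for non-discrete $H$ is essentially correct, and you have put your finger on exactly the point that makes this statement non-trivial (the paper offers nothing to compare against here: it dismisses the proposition with ``the proof is straightforward'' and gives no argument). A containment $B^2\subseteq B'$ that happens to hold among traces on $H$ need not be reflected by any chosen open extensions in $G$, and your remedy --- build the chain $(W_n)$ in $G$ first with $W_{n+1}^2\subseteq W_n$ and $W_n=W_n^{-1}$, take $B_n=W_n\cap H$ as the base, and delete a pair $y,y^{-1}$ at each stage so that $B_n^2\not\subseteq B_n$ --- does eliminate every containment not already witnessed by the chain. Two details you should make explicit: the deletion also makes $(B_n)$ \emph{strictly} decreasing (the deleted $y$ lies in $B_n\setminus B_{n+1}$), and strictness is what both your inference ``$B_n\subseteq B_m$, hence $m\leq n$'' and the well-definedness of $p(B_n)=W_n$ actually require.

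The genuine gap is the case you set aside in one clause and never return to: discrete $H$. It satisfies the hypothesis ($\mathcal{T}_H(e)=_T 1\leq_T\omega$), so the proposition covers it, but there your normalization is impossible, and no other trick can replace it. Any local base $\mathcal{B}$ of a discrete $H$ must contain $\{e\}$, and conditions i)--iii) applied with $B=B'=\{e\}$ force $p(\{e\})$ to be an open, symmetric, multiplicatively closed set meeting $H$ exactly in $\{e\}$ --- that is, an open subgroup of $G$ intersecting $H$ trivially. For $G=\mathbb{R}$ and $H=\mathbb{Z}$ no such subgroup exists, since the only open subgroup of the connected group $\mathbb{R}$ is $\mathbb{R}$ itself; hence $(\mathbb{R},\mathbb{Z})$ is not a good pair even though $\mathbb{Z}$ is closed, normal, and discrete. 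So the gap in your proof cannot be closed: Proposition~\ref{mgp} as stated is false in the discrete case, and your argument proves precisely the part of it that is true. This is not an idle corner case for the paper, either: Theorem~\ref{q_p} invokes Proposition~\ref{mgp} exactly for the discrete subgroup $A([Y])$ of $A(X/Y)$. That particular application can be repaired, but only by an ad hoc argument special to free Abelian groups --- the kernel of the continuous ``degree'' homomorphism $A(X/Y)\to\mathbb{Z}$ induced by the constant map with value $1$ is an open subgroup meeting $A([Y])$ trivially, and mapping $\{0\}$ to this kernel witnesses the good-pair conditions --- not by the general proposition or by your construction.
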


\begin{thm}\label{tg_rbase} Let $G$ be a topological group and $H$ a closed normal subgroup of $G$ such that $(G, H)$ is a good pair. Suppose that $P$, $Q$, and $R$ are Dedekind-complete posets such that $P\times Q\leq_T R$.  If $H$ has a $P$-base and $G/H$ has a $Q$-base, then $G$ has an $R$-base.
\end{thm}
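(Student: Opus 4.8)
The plan is to reduce everything to producing a $(P\times Q)$-base at the identity $e$ of $G$. Since a Hausdorff topological group has a $P$-base exactly when it has a neighborhood $P$-base at $e$ (translate to get one at every point), and since the paper's characterization says this is equivalent to $\mathcal{T}_e(G)\le_T P\times Q$, it suffices to exhibit a witness of $P\times Q\ge_T \mathcal{T}_e(G)$; composing with the hypothesis $P\times Q\le_T R$ then upgrades this to $\mathcal{T}_e(G)\le_T R$, i.e. an $R$-base. Because all posets here are Dedekind complete, I will construct this witness as an order-preserving map $P\times Q\to \mathcal{T}_e(G)$ whose image is cofinal, i.e. a monotone neighborhood base at $e$ indexed by $P\times Q$.

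First I would assemble the data. Let $\pi\colon G\to G/H$ be the canonical quotient homomorphism, which is open and continuous. Fix the local base $\mathcal{B}$ at the identity of $H$ and the extension map $p$ witnessing that $(G,H)$ is a good pair. Any local base is cofinal in $\mathcal{T}_e(H)$, hence Tukey equivalent to it, so from $\mathcal{T}_e(H)\le_T P$ I obtain an order-preserving cofinal selection $p\mapsto V_p\in\mathcal{B}$, i.e. a $P$-base of $H$ lying inside $\mathcal{B}$. From the $Q$-base $(W_q)_{q\in Q}$ at the identity of $G/H$, pulling back gives the monotone family $\pi^{-1}(W_q)$ of open neighborhoods of $e$ in $G$. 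The candidate base is $N_{(p,q)} := p(V_p)\cap \pi^{-1}(W_q)$; by condition (i) of the good pair we have $e\in p(V_p)$, so each $N_{(p,q)}$ is an open neighborhood of $e$. Note the two trace identities $N_{(p,q)}\cap H = p(V_p)\cap H = V_p$ (using $H\subseteq \pi^{-1}(W_q)$ and (i)) and $\pi(N_{(p,q)})\subseteq W_q$, which are monotone in $p$ and $q$ respectively and which pin down the behaviour of $N_{(p,q)}$ in the $H$-direction and in the transverse direction.

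Two things then remain. For \emph{order-preservation}, I need $(p,q)\ge(p',q')$ to force $N_{(p,q)}\subseteq N_{(p',q')}$; the $\pi^{-1}(W)$-factor is automatically nested, and I would arrange the extensions $p(V_p)$ to be nested along a decreasing chain of $V_p$'s by choosing them coherently via conditions (ii) and (iii), mirroring the construction underlying Proposition~\ref{mgp}. The \emph{cofinality} (neighborhood-base) property is the crux: given an open $O\ni e$, I must find $(p,q)$ with $N_{(p,q)}\subseteq O$. The $H$-direction is controlled by picking $V_p\subseteq O\cap H$; the difficulty is the transverse direction, since $p(V_p)$ may protrude from $O$ away from $H$ and, without compactness, shrinking $\pi^{-1}(W_q)$ does not visibly trap $p(V_p)$ inside $O$. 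My plan is to pass to a symmetric $O_1$ with $O_1^2\subseteq O$, choose $V_p\subseteq O_1\cap H$, and use the good-pair square and inverse conditions (ii) and (iii) to show that, after intersecting with a suitable $\pi^{-1}(W_q)$, every element of $p(V_p)$ factors as a product of an element of $O_1$ and an element of $O_1\cap H$, hence lies in $O$ — a Birkhoff--Kakutani-style computation inside $G$ replacing the unavailable compactness argument. I expect this reconciliation of the purely algebraic good-pair data with the topology near the \emph{entire} coset $H$ to be the main obstacle; conditions (i)--(iii) are exactly what make it go through. Once the base property is established, $\mathcal{T}_e(G)\le_T P\times Q\le_T R$ together with left translation yields an $R$-base at every point, so $G$ has an $R$-base.
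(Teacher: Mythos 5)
Your proposal is correct and follows essentially the same route as the paper's own proof: you build the candidate base from intersections of good-pair extensions $p(V_p)$ with pullbacks $\pi^{-1}(W_q)$ of a $Q$-base of $G/H$, and your ``Birkhoff--Kakutani-style'' factorization (choose $W_q$ inside the open image $\pi(O_1\cap p(V_{p_2}))$, write $g=ab$ with $a\in O_1\cap p(V_{p_2})$ and $b=a^{-1}g\in H$ trapped in $O_1\cap H$ via the square/inverse conditions) is exactly the computation the paper carries out. The only cosmetic difference is that you first produce a $P\times Q$-base and then compose with $P\times Q\le_T R$, whereas the paper indexes the base directly by $R$ through an order-preserving cofinal map $\phi\colon R\to P\times Q$; and the monotone arrangement of the extensions $p(V_p)$, which you flag as something to ``arrange,'' is asserted at the same level of detail in the paper itself.
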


\begin{proof} Let $\phi$ be an order-preserving mapping from $R$ to $P\times Q$ such that $\phi(R)$ is cofinal in $P\times Q$. Let $e$ be the identity element in $G$.

Since $(G, H)$ is a good pair and $H$ has a $P$-base, there is a collection $\{W_p: p\in P\}$ of open neighborhoods of $e$ in $G$ such that: i) $\{W_p\cap H: p\in P\}$ is a $P$-base of $H$; ii) if $(W_p\cap H)^2\subset W_{p'}$ then $W_p^2\subset W_{p'}$; and iii) if $(W_p\cap H)^{-1}
\subset W_p'\cap H$ then $W_p^{-1}\subset W_{p'}$. Let $\{V_q: q\in Q\}$ be a symmetric $Q$-base of $G/H$. Let $q$ be the quotient mapping from $G$ to $G/H$. For each $r\in R$, define $U_r=W_{\pi_1(\phi(r))}\cap q^{-1}(V_{\pi_2(\phi(r))})$. By the order-preserving property of $\phi$, it is straightforward to verify that $U_r\supset U_{r'}$ given $r\leq r'$.

We show that $\mathcal{U}=\{U_r: r\in R\}$ is a base for $G$. Take an open neighborhood $U$ of the identity element $e$. Then, we pick an open symmetric neighborhood $V$ of the unit such that $VV\subset U$. Fix $p_1\in P$ such that $W_{p_1}\cap H\subset V$. By conditions ii) and iii) above,  there exists a $p_2\in P$ such that  $p_2\geq p_1$ and  $W_{p_2}^{-1}W_{p_2}\subset W_{p_1}$. Pick $q_1\in Q$ such that $q(V_{q_1}) \subset q(V\cap W_{p_2})$. Since $\phi(R)$ is cofinal in $P\times Q$, there exists an $r\in R$ such that $\pi_1(\phi(r)) \geq p_2$ and $\pi_{2}(\phi(r))\geq q_1$.

We claim that $U_r\subset U$. Pick $g\in U_r$. By the definition of $U_r$ and the choice of $r$, $g\in W_{p_2}$ and $g\in q^{-1}(V_{\pi_2(\phi(r))})\subset V_{q_1}\cdot H\subseteq  V\cap W_{p_2} \cdot H$. Hence $g=ab$ for some $a\in V\cap W_{p_2} $ and $b\in H$. Therefore $b=a^{-1}g\in W^{-1}_{p_2}W_{p_2}\subset W_{p_1}$. Finally, we obtain that $g\in (V\cap W_{p_2})\cdot (W_{p_1}\cap H)\subset V\cdot V\subset U$.

Therefore, $\mathcal{U}$ is an $R$-base for the topological group $G$. \end{proof}

Therefore, if $H$ is a metrizable closed normal subgroup of $G$, $(G, H)$ is a good pair by Proposition~\ref{mgp}. Let $M$ be any noncompact separable metric space.  Note that $\mathcal{K}(M)\geq_T \mathcal{K}(M)\times \omega$. Then if $G/H$ has a $\mathcal{K}(M)$-base, then $G$ has a $\mathcal{K}(M)$-base. The case $M=\omega^\omega$ is proved in \cite{GKL15}*{Proposition 2.9}.

Next we discuss good pairs of the form $(A(X), A(Y, X))$, where $A(Y, X)$ is the subgroup of $A(X)$ generated by $Y$. It is shown in \cite{ArhTka2008}*{Theorem 7.4.5} that $A(Y, X)$ is a closed subgroup of $A(X)$ if $Y$ is a closed subspace of $X$. Also, if $Y$ is a retract of $X$, then $A(Y, X)$ is topologically isomorphic to $A(Y)$, in which case we consider them to be same.

\begin{prop}\label{suf_gp} Let $G$ be a topological group and $H$ a closed normal subgroup of $G$. If there is a continuous homomorphism from $G$ to  $H$ whose restriction on $H$ is identity mapping, $(G, H)$ is a good pair. \end{prop}

\begin{proof} Let $\phi$ be a continuous homomorphism from $G$ to $H$ such that $\phi(h)=h$ given $h\in H$. Fix a local base $\mathcal{B}$ of the identity in $H$. For each $B\in \mathcal{B}$, define $p(B)=\phi^{-1}(B)$. It is clear that $p(B)\cap H=B$ for each $B\in \mathcal{B}$ since $\phi(h)=h$ for any $h\in H$.

Pick $B, B'\in \mathcal{B}$ such that $B^2\subseteq B'$. Choose $g$ and $g'$ in $p(B)$. We show that $g\cdot g'\in p(B)$. By the definition of $p(B)$, there exists $h, h'\in B$ such that $\phi(g)=h$ and $\phi(g')=h'$. Hence $\phi(g\cdot g')=\phi(g)\cdot \phi(g')=h\cdot h'\in B'$. Therefore, $g\cdot g'\in \phi^{-1}(B')=p(B')$.

Pick $B, B'\in \mathcal{B}$ such that $B^{-1}\subset B'$. Choose $g\in p(B)$. Then there exists $h\in B$ such that $\phi(g)=h$, then $\phi(g^{-1})=h^{-1}$. Since  $B^{-1}\subset B'$, $\phi(g^{-1})=h^{-1}\in B'$, i.e., $g^{-1}\in p(B')$.  This finishes the proof. \end{proof}

\begin{prop} Let $X$ be a Tychonoff space and $Y$ a retract of $X$. Then $(A(X), A(Y,X))$ is a good pair.  \end{prop}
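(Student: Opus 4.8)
The plan is to reduce this statement to Proposition~\ref{suf_gp}, which we have just proved: to show $(A(X), A(Y,X))$ is a good pair, it suffices to produce a continuous homomorphism $\phi\colon A(X)\to A(Y,X)$ whose restriction to $A(Y,X)$ is the identity. Since $Y$ is a retract of $X$, we have a continuous retraction $r\colon X\to Y$, i.e.\ a continuous map with $r\restriction Y = \mathrm{id}_Y$. The natural idea is to extend $r$ to a homomorphism on the free Abelian group level and verify it does what we want.

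First I would use the universal property of $A(X)$. Let $\sigma_X\colon X\to A(X)$ and $\sigma_Y\colon Y\to A(Y)$ be the canonical embeddings, and recall $A(Y,X)$ is topologically isomorphic to $A(Y)$ since $Y$ is a retract (as noted in the excerpt, we identify them). The composition $\sigma_Y\circ r\colon X\to A(Y)$ is a continuous map from $X$ into an Abelian topological group, so by the universal property of $A(X)$ there is a unique continuous homomorphism $\phi\colon A(X)\to A(Y)\cong A(Y,X)$ with $\phi\circ\sigma_X = \sigma_Y\circ r$. This $\phi$ is the candidate for Proposition~\ref{suf_gp}.

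Next I would check that $\phi\restriction A(Y,X)$ is the identity. It is enough to verify this on the generators $\sigma_X(y)$ for $y\in Y$, since $A(Y,X)$ is topologically generated by $\sigma_X(Y)$ and $\phi$ is a continuous homomorphism. For $y\in Y$ we compute $\phi(\sigma_X(y)) = \sigma_Y(r(y)) = \sigma_Y(y)$, using $r\restriction Y=\mathrm{id}$; under the identification of $\sigma_Y(y)$ with $\sigma_X(y)\in A(Y,X)$ this says $\phi$ fixes each generator, hence fixes all of $A(Y,X)$. Having established that $\phi$ is a continuous homomorphism from $A(X)$ into its closed normal (indeed, since $A(X)$ is Abelian, automatically normal) subgroup $A(Y,X)$ restricting to the identity there, Proposition~\ref{suf_gp} immediately gives that $(A(X), A(Y,X))$ is a good pair.

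The only genuinely non-routine point is the bookkeeping around the identification $A(Y)\cong A(Y,X)$ and checking that $\phi$ indeed lands inside $A(Y,X)\subseteq A(X)$ rather than in an abstract copy of $A(Y)$; once we fix the topological isomorphism $A(Y)\cong A(Y,X)$ coming from the retraction, the map $\sigma_Y$ is literally the inclusion of generators of $Y$ into $A(Y,X)$, and everything is consistent. I expect this identification step to be the main thing to state carefully, while the verification on generators and the appeal to universality are entirely formal. No separate treatment of the inverse condition is needed, since in an Abelian group $\phi$ automatically respects inverses and Proposition~\ref{suf_gp} handles all three good-pair conditions uniformly.
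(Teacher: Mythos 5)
Your proposal is correct and follows essentially the same route as the paper: the paper also invokes the functorial extension of the retraction $r$ to a continuous homomorphism $A(r)\colon A(X)\to A(Y)$ (via Corollary 7.1.9 of \cite{ArhTka2008}, which is exactly the universal-property argument you give), identifies $A(Y)$ with $A(Y,X)$, notes the restriction is the identity, and applies Proposition~\ref{suf_gp}. The only cosmetic difference is that the paper explicitly cites Theorem 7.4.5 of \cite{ArhTka2008} for closedness of $A(Y,X)$ in $A(X)$, a fact you use implicitly.
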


\begin{proof} Since $Y$ is a retract of $X$, $Y$ is a closed subspace of $X$. By Theorem 7.4.5 in \cite{ArhTka2008}, $A(Y, X)$ is a closed subgroup of $A(X)$.

Let $f$ be a continuous mapping from $X$ to $Y$ such that $f(y)=y$ for each $y\in Y$. By Corollary 7.1.9 in \cite{ArhTka2008}, $f$ admits an extension to a continuous homomorphism $A(f)$ from $A(X)$ to $A(Y)$. Clearly for any $h\in A(Y)$, $A(f)(h)=h$.  The identity mapping from $A(Y)$ to $A(Y, X)$ is a topological isomorphism. Hence, we obtain a continuous homomorphism from $A(X)$ to $A(Y, X)$ whose restriction on $A(Y, X)$ is the identity mapping. By Proposition~\ref{suf_gp}, $(A(X),A(Y, X))$ is a good pair.
\end{proof}

Let $Y$ be a closed subspace of $X$. Define an equivalence relation $\sim$ on $X$ by $x\sim y$ if and only if $x, y\in Y$. We denote the quotient space $X/\sim$ by $X/Y$; there is a canonical quotient mapping from $X$ to $X/Y$. We denote the equivalence class $[y]$ for any $y\in Y$ by $[Y]$ which clearly is a retract of $X/Y$. So in the following discussion, we use $A([Y])$ to represent the subgroup $A([Y], X/Y)$.  Next, we investigate the relations of $A(X)$, $A(Y)$, and $A(X/Y)$ in terms of $P$-bases.

\begin{prop}\label{quot} Let $Y$ be a closed subspace of a Tychonoff space $X$. Then $A(X/Y)/A([Y])$ is topologically isomorpic to $A(X)/A(Y, X)$.
\end{prop}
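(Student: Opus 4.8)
The plan is to build the topological isomorphism directly from the universal properties of free Abelian topological groups and of quotient maps, by exhibiting two mutually inverse continuous homomorphisms. Throughout I would write $\sigma_X \colon X \to A(X)$ and $\sigma_{X/Y}\colon X/Y \to A(X/Y)$ for the canonical embeddings, $q \colon X \to X/Y$ for the quotient map, $\rho\colon A(X)\to A(X)/A(Y,X)$ and $\pi\colon A(X/Y)\to A(X/Y)/A([Y])$ for the quotient homomorphisms, and $A(q)\colon A(X)\to A(X/Y)$ for the continuous homomorphism extending $q$ (Corollary 7.1.9 in \cite{ArhTka2008}). Note that $A(Y,X)$ is closed in $A(X)$ (Theorem 7.4.5 in \cite{ArhTka2008}) and $A([Y])$ is closed in $A(X/Y)$, so both quotients are Hausdorff topological groups.

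First I would construct the map $\Phi$ from left to right. Since $q(Y)=\{[Y]\}$, the homomorphism $A(q)$ carries $A(Y,X)$ into $A([Y])$, so $\pi\circ A(q)$ annihilates $A(Y,X)$. As $A(Y,X)$ is a closed normal subgroup, the universal property of topological quotient groups (the quotient map $\rho$ is an open continuous homomorphism) yields a continuous homomorphism $\Phi\colon A(X)/A(Y,X)\to A(X/Y)/A([Y])$ with $\Phi\circ\rho=\pi\circ A(q)$.

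Next I would construct the reverse map $\Psi$. The composite $g=\rho\circ\sigma_X\colon X\to A(X)/A(Y,X)$ is continuous and sends every point of $Y$ to the identity. Since $q$ is a quotient map—here the hypothesis that $Y$ is closed is used, so that $X/Y$ carries the quotient topology—$g$ factors as $g=\bar g\circ q$ for a continuous map $\bar g\colon X/Y\to A(X)/A(Y,X)$. By the universal property of $A(X/Y)$, the map $\bar g$ extends to a continuous homomorphism $A(\bar g)\colon A(X/Y)\to A(X)/A(Y,X)$. Because $A(\bar g)(\sigma_{X/Y}([Y]))=\bar g([Y])=g(y)=0$ for $y\in Y$, the homomorphism $A(\bar g)$ kills the generator of $A([Y])$, hence factors through a continuous homomorphism $\Psi\colon A(X/Y)/A([Y])\to A(X)/A(Y,X)$.

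Finally I would verify that $\Phi$ and $\Psi$ are mutually inverse. Both target groups are topologically generated by the images of points of $X$ (respectively $X/Y$), so it suffices to check the compositions on these generators. For $x\in X$ one computes $\Psi\Phi(\rho(\sigma_X(x)))=\Psi(\pi(\sigma_{X/Y}(q(x))))=\bar g(q(x))=g(x)=\rho(\sigma_X(x))$, and symmetrically $\Phi\Psi(\pi(\sigma_{X/Y}(z)))=\pi(\sigma_{X/Y}(z))$ for each $z\in X/Y$, the case $z=[Y]$ being trivial. Since a continuous homomorphism is determined by its values on a topologically generating set, $\Psi\circ\Phi$ and $\Phi\circ\Psi$ are the respective identities, so $\Phi$ is a continuous homomorphism with continuous inverse $\Psi$, i.e. a topological isomorphism. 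The step requiring the most care is the descent of $g$ through the quotient space $X/Y$: this is precisely where closedness of $Y$ enters (ensuring $X/Y$ carries the quotient topology and that $A([Y])$ is closed), and it is the point at which a purely algebraic identification of the two groups must be upgraded to a topological one.
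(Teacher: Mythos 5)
Your proof is correct, but it takes a genuinely different route from the paper's. The paper's argument is a one-directional application of the first isomorphism theorem for topological groups: it extends the quotient map $f\colon X\to X/Y$ to a homomorphism $A(f)\colon A(X)\to A(X/Y)$, which by Corollary 7.1.9 of \cite{ArhTka2008} is \emph{open} as well as continuous, composes with the canonical open map $\pi\colon A(X/Y)\to A(X/Y)/A([Y])$, observes that the resulting open continuous surjection $\phi=\pi\circ A(f)$ has kernel exactly $A(Y,X)$, and then invokes Theorem 1.5.13 of \cite{ArhTka2008} to get the topological isomorphism. Your $\Phi$ is essentially the paper's $\phi$ descended to $A(X)/A(Y,X)$, but you never use the openness of $A(q)$ (the deeper of the paper's two external inputs) nor the isomorphism theorem; instead you build the inverse $\Psi$ explicitly, by descending $\rho\circ\sigma_X$ through the topological quotient $q$ and then applying the defining universal property of $A(X/Y)$, and you check that $\Phi$ and $\Psi$ are mutually inverse on the images of $X$ and $X/Y$, which algebraically generate the two groups, so the check suffices. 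What the paper's route buys is brevity, plus the openness of $\phi$ as a by-product; what your route buys is self-containedness: the only inputs are the universal property of free Abelian topological groups, factorization of continuous maps through quotient maps, and the standard fact that quotient homomorphisms of topological groups are open (which is what makes the induced maps $\Phi$ and $\Psi$ continuous). Your closing remark correctly locates where closedness of $Y$ is used (so that $X/Y$ carries the quotient topology and $A(Y,X)$, $A([Y])$ are closed, making both quotient groups Hausdorff); both proofs share the implicit assumption that $X/Y$ is Tychonoff so that $A(X/Y)$ is defined.
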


\begin{proof} Let $f$ be the quotient map from $X$ to $X/Y$. By Corollary 7.1.9 in \cite{ArhTka2008}, $f$ admits an extension to an open continuous homomorphism $A(f)$ from $A(X)$ to $A(X/Y)$. The canonical mapping $\pi$ from $A(X/Y)$ to $A(X/Y)/A([Y])$ is an open continuous mapping. Hence we obtain an open continuous mapping $\phi=\pi\circ A(f)$ from $A(X)$ to the quotient group $A(X/Y)/A([Y])$.

Let $e$ be the identity in $A(X/Y)/A([Y])$. Clearly, $\phi^{-1}(e)=A(Y, X)$. Then by Theorem 1.5.13 in \cite{ArhTka2008}, $A(X)/A(Y, X)$ is topologically isomorpic to $A(X/Y)/A([Y])$. \end{proof}

\begin{thm}\label{q_p}Let $Y$ be a closed subspace of a Tychonoff space $X$. If the free Abelian group $A(X)$ has a $P$-base, then $A(X/Y)$ also has a $P$-base. \end{thm}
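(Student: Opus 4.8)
The plan is to exhibit an open continuous surjective homomorphism from $A(X)$ onto $A(X/Y)$ and then invoke Proposition~\ref{op_cont}, which transfers a $P$-base across open continuous surjections. Recall that a topological group has a $P$-base precisely when it has a neighborhood $P$-base at the identity, so the hypothesis that $A(X)$ has a $P$-base is exactly the hypothesis Proposition~\ref{op_cont} needs; once the desired map is in hand, the conclusion for $A(X/Y)$ is immediate.

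For the map I would reuse the homomorphism already appearing in the proof of Proposition~\ref{quot}. Let $f\colon X\to X/Y$ be the canonical quotient map. By Corollary~7.1.9 in \cite{ArhTka2008}, $f$ extends to an open continuous homomorphism $A(f)\colon A(X)\to A(X/Y)$. I would then verify surjectivity: since $f$ is onto, $A(f)(X)=f(X)=X/Y$, and as $X/Y$ topologically generates $A(X/Y)$ while the image of the homomorphism $A(f)$ is a subgroup of $A(X/Y)$ containing this generating set, the image must be all of $A(X/Y)$. Thus $A(f)$ is an open continuous homomorphism of $A(X)$ onto $A(X/Y)$.

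With this map available the argument closes in one line: $A(X)$ has a $P$-base by hypothesis and $A(f)$ is open, continuous, and surjective, so Proposition~\ref{op_cont} yields that $A(X/Y)$ has a $P$-base.

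The only substantive ingredient is the openness of $A(f)$, which is what distinguishes this from a generic continuous homomorphism and is precisely the property recorded, with its citation to \cite{ArhTka2008}, in the proof of Proposition~\ref{quot}. It is the lifting of the quotient map $f$ to an \emph{open} homomorphism of the free Abelian groups that makes the whole argument work; the remaining verifications (continuity, the homomorphism property, and surjectivity) are routine, so I expect this openness to be the single point carrying the proof.
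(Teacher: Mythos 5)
Your proof is correct, and it is genuinely shorter than the paper's. Both arguments ultimately hinge on the same external ingredient---Corollary~7.1.9 of \cite{ArhTka2008}, which says that the homomorphic extension $A(f)\colon A(X)\to A(X/Y)$ of the quotient map $f\colon X\to X/Y$ is an open homomorphism---but you apply it head-on: $A(f)$ is open, continuous and onto, so Proposition~\ref{op_cont} transfers the $P$-base from $A(X)$ to $A(X/Y)$ in a single step. The paper instead uses that fact only inside Proposition~\ref{quot}: it first pushes the $P$-base down to the quotient group $A(X)/A(Y,X)$ via the canonical open homomorphism (Proposition~\ref{op_cont} again), identifies that quotient with $A(X/Y)/A([Y])$ (Proposition~\ref{quot}), and then climbs back up from this quotient to $A(X/Y)$ with the good-pair machinery: $A([Y])$ is discrete, so $(A(X/Y),A([Y]))$ is a good pair by Proposition~\ref{mgp}, and Theorem~\ref{tg_rbase} produces a $1\times P$-base. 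Your argument shows this down-then-up detour is not needed for Theorem~\ref{q_p}; what the paper's route buys is that it runs entirely on components (Propositions~\ref{quot}, \ref{mgp}, Theorem~\ref{tg_rbase}) which the paper develops anyway and reuses immediately for the retract theorem that follows. One small point to tighten in your write-up: ``topologically generates'' by itself only gives that the image of $A(f)$ is a \emph{dense} subgroup of $A(X/Y)$. Surjectivity does follow, either because $X/Y$ in fact \emph{algebraically} generates $A(X/Y)$ (a standard property of free Abelian topological groups), or because the image of an open homomorphism is an open, hence closed, subgroup, and a closed dense subgroup is everything; alternatively, Corollary~7.1.9 already asserts that $A(f)$ maps $A(X)$ \emph{onto} $A(X/Y)$. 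This is a cosmetic repair, not a gap.
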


\begin{proof} Since $Y$ is closed, the subgroup $A(Y, X)$ is closed and normal. The canonical mapping from $A(X)$ to $A(X)/A(Y, X)$ is open, continuous, and onto. Then by Proposition~\ref{op_cont}, $A(X)/A(Y, X)$ has a $P$-base. By Proposition~\ref{quot}, $A(X/Y)/A([Y])$ has a $P$-base. Note that $A([Y])$ is a discrete subspace of $A(X/Y)$. So $(A(X/Y), A([Y]))$ is a good pair by Proposition~\ref{mgp}. Applying Theorem~\ref{tg_rbase}, $A(X/Y)$ has a $1\times P$-base, i.e., a $P$-base. \end{proof}

The following example is a direct application of Theorem~\ref{q_p}.

\begin{ex} Let $S_n=\{x_{m, n}: m\in \omega\}\cup \{x_n\}$ be a nontrivial convergent sequence with $x_n$ being the limit point for each $n$ and let $T$ be the topological sum of the family $\{S_n: n\in \omega\}$. Note that $T$ is a metrizable space whose subspace of non-isolated points is $\sigma$-compact. Hence by Theorem 1.2 in \cite{BL18}, the free Abelian group $A(T)$ has an $\omega^\omega$-base.

Let $S_\omega$ be the quotient space of $T$ obtained by identifying all the limit points to a singleton. Then by Theorem~\ref{q_p}, $A(S_\omega)$ has an $\omega^\omega$-base. \end{ex}

We don't know whether the same result holds for free topological groups.

\begin{qu} Let $Y$ be a closed subspace of a Tychonoff space $X$. Suppose that the free group  $F(X)$ has a $P$-base. Does  $F(X/Y)$ have a $P$-base?
\end{qu}

Next we show that if $Y$ is a retract of $X$, then the bases of $A(Y)$ and $A(X/Y)$ at the identity are both Turkey quotients of some base of $A(X)$ at identity.
\begin{thm} Suppose that $P$, $Q$, and $R$ are Dedekind-complete posets such that $P\times Q\leq_T R$. Let $X$ be a Tychonoff space and $Y$  a retract of $X$.

If  $A(Y)$ has a $P$-base and $A(X/Y)$ have a $Q$-base, then $A(X)$ has an $R$-base.  \end{thm}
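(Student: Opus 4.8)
The plan is to combine the three-space theorem (Theorem~\ref{tg_rbase}) with the structural results just established about retracts. The key observation is that when $Y$ is a retract of $X$, we have two facts at our disposal: first, $A(Y,X)$ is topologically isomorphic to $A(Y)$ and is a closed subgroup of $A(X)$; second, the pair $(A(X), A(Y,X))$ is a good pair by the Proposition immediately preceding this theorem. So I would set $G = A(X)$ and $H = A(Y,X) \cong A(Y)$, which has a $P$-base by hypothesis.

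**Next I would** identify the quotient group $G/H = A(X)/A(Y,X)$. This is exactly where Proposition~\ref{quot} enters: it tells us that $A(X)/A(Y,X)$ is topologically isomorphic to $A(X/Y)/A([Y])$. Now I need to know that this quotient has a $Q$-base. Since $Y$ is a retract of $X$, the class $[Y]$ is a retract of $X/Y$, so $A([Y])$ is a closed subgroup of $A(X/Y)$ and is in fact a discrete subspace (as $[Y]$ is a single point in $X/Y$). By Proposition~\ref{mgp}, the pair $(A(X/Y), A([Y]))$ is a good pair, and since $A(X/Y)$ has a $Q$-base by hypothesis and $A([Y])$ is countable/discrete, an application of Proposition~\ref{op_cont} to the open quotient map $A(X/Y) \to A(X/Y)/A([Y])$ shows that $A(X/Y)/A([Y])$ has a $Q$-base. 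Transporting along the isomorphism of Proposition~\ref{quot}, we conclude $G/H$ has a $Q$-base.

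**At this point** the hypotheses of Theorem~\ref{tg_rbase} are fully met: $(G,H)$ is a good pair, $H$ has a $P$-base, $G/H$ has a $Q$-base, and $P \times Q \leq_T R$. Applying that theorem directly yields that $G = A(X)$ has an $R$-base, which is the desired conclusion.

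**The main obstacle** I anticipate is verifying that $G/H$ genuinely has a $Q$-base rather than merely assuming it from the hypothesis on $A(X/Y)$; the subtlety is that the hypothesis is about $A(X/Y)$, not about the quotient $A(X/Y)/A([Y])$, so one must pass through the open continuous quotient map and invoke Proposition~\ref{op_cont} to descend the $Q$-base. This is the step that genuinely uses that $A([Y])$ is nicely embedded (discrete/closed), and care is needed to confirm the quotient map $A(X/Y) \to A(X/Y)/A([Y])$ is open, which follows from the general fact that quotient homomorphisms of topological groups by closed subgroups are open. Everything else is bookkeeping: checking that the retract hypothesis transfers to $[Y]$ inside $X/Y$, and assembling the pieces for Theorem~\ref{tg_rbase}.
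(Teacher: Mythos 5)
Your proposal is correct and follows essentially the same route as the paper's own proof: $(A(X),A(Y,X))$ is a good pair via the retract, the $Q$-base descends from $A(X/Y)$ to $A(X/Y)/A([Y])\cong A(X)/A(Y,X)$ using the open quotient map (Proposition~\ref{op_cont}) together with Proposition~\ref{quot}, and Theorem~\ref{tg_rbase} then yields the $R$-base for $A(X)$. The only cosmetic difference is your appeal to Proposition~\ref{mgp} for the pair $(A(X/Y),A([Y]))$, which is superfluous in this direction --- openness of the canonical quotient map already suffices for Proposition~\ref{op_cont}.
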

\begin{proof} Because $Y$ is a retract of $X$, $(A(X), A(Y, X))$ is a good pair by Proposition~\ref{suf_gp}. So it is sufficient to show that $A(X)/A(Y, X)$ has a $Q$-base. Then $A(X)$ has an $R$-base by Theorem~\ref{tg_rbase}.

Next we show that $A(X)/A(Y, X)$ has a $Q$-base. Since $A(X/Y)$ have a $Q$-base and the canonical mapping $\pi$ from $A(X/Y)$ to $A(X/Y)/A([Y])$ is an open continuous mapping, $A(X/Y)/A([Y])$ has a $Q$-base by Proposition~\ref{op_cont}. By Proposition~\ref{quot}, $A(X)/A(Y, X)$ is topologically isomorphic to $A(X/Y)/A([Y])$. Hence $A(X)/A(Y, X)$ has a $Q$-base. This finishes the proof. \end{proof}

For every directed poset $P$, it is straightforward to verify that $P\times P=_T P$. Hence we obtain the following corollary.

\begin{cor} Let $P$ be  any Dedekind-complete directed poset.  Suppose that $X$ is a Tychonoff space and $Y$ is a retract of $X$. If both $A(Y)$ and $A(X/Y)$ have a $P$-base, then $A(X)$ has a $P$-base. \end{cor}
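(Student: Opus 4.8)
The plan is to obtain this corollary as an immediate specialization of the theorem immediately preceding it, taking $Q=P$ and $R=P$. Under that specialization the only hypothesis of the theorem that must be checked is $P\times Q\leq_T R$, which here reads $P\times P\leq_T P$; everything else in the corollary's hypotheses matches the theorem verbatim (namely that $A(Y)$ has a $P$-base and $A(X/Y)$ has a $Q$-base with $Q=P$).

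First I would verify the order-theoretic fact $P\times P\te P$ for an arbitrary directed poset $P$, which is the ``straightforward'' claim announced just before the statement. The direction actually needed is $P\tq P\times P$, and I would witness it by the diagonal map $d\colon P\to P\times P$, $d(p)=(p,p)$. This map is visibly order-preserving, so by the characterization of Tukey quotients between Dedekind-complete posets recorded in the Preliminaries it suffices to check that its image is cofinal in $P\times P$. Given $(a,b)\in P\times P$, directedness of $P$ produces $c\in P$ with $c\geq a$ and $c\geq b$, whence $d(c)=(c,c)\geq(a,b)$; thus $d(P)$ is cofinal and $P\tq P\times P$, i.e. $P\times P\leq_T P$. (Should one want the full equivalence $P\times P\te P$, the reverse quotient $P\times P\tq P$ is witnessed by either coordinate projection $\pi_i$, which is order-preserving and onto, hence has cofinal image.)

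With $P\times P\leq_T P$ established, the proof concludes in a single step: the hypotheses give a $P$-base on $A(Y)$ and a $P$-base on $A(X/Y)$, and we have just shown $P\times P\leq_T P$, i.e. $P\times Q\leq_T R$ for $Q=R=P$. Invoking the preceding theorem then yields an $R$-base on $A(X)$, that is, a $P$-base. I anticipate no real obstacle, since this is a direct specialization; the only point needing (minimal) attention is the cofinality of the diagonal, and there the essential ingredient is precisely that $P$ is directed, without which $d(P)$ need not be cofinal in $P\times P$.
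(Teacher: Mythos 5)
Your proposal is correct and is exactly the paper's argument: the corollary is obtained by specializing the preceding theorem to $Q=R=P$, justified by the fact that $P\times P=_T P$ for any directed poset (the paper states this as ``straightforward''; your diagonal-map verification of $P\times P\leq_T P$ is the standard filling-in of that claim). No gaps.
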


The result above holds clearly for all the posets in the form $\mathcal{K}(M)$ for some topological space $M$. Therefore, the free Abelian topological group $A(X)$ has an $\omega^\omega$-base if $X$ is a Tychonoff space and $Y$ is a retract of $X$ such that  both $A(Y)$ and $A(X/Y)$  have an $\omega^\omega$-base.

\section{Topological Groups}\label{tg_sa_m}

\begin{df} A space $X$ is said to satisfy Property (AS) if the following holds:
\begin{description}
\item[(AS)] for any family $\{x_{m, n}: m, n\in \omega\}\subseteq X$ with $\lim_m x_{m, n}=x$ for all $n\in \omega$, there exist two strictly increasing sequences of natural numbers $\{m_k: k\in \omega\}$ and $\{n_k: k\in \omega\}$ such that $\lim_k x_{m_k, n_k}=x$.
\end{description}
\end{df}

It is proved that in \cite{KKL11} any Fr\'{e}chet-Urysohn topological group satisfies Property (AS).
\begin{thm} Let $P=\mathcal{K}(M)$ for a separable metric space $M$. If $G$ is a Fr\'{e}chet-Urysohn topological group with a $P$-base, then $G$ is first-countable, hence metrizable. \end{thm}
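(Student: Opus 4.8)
The plan is to prove first-countability of $G$ at the identity $e$: by homogeneity this yields first-countability everywhere, and then the Birkhoff--Kakutani theorem recalled in the introduction upgrades this to metrizability. Throughout I fix a neighbourhood $\K(M)$-base $(U_K)_{K\in\K(M)}$ at $e$, with monotone indexing ($K\subseteq K'$ implies $U_{K'}\subseteq U_K$); replacing each $U_K$ by $U_K\cap U_K^{-1}$ I may assume every $U_K$ is symmetric. Two standing facts will be used. First, since $G$ is Fr\'echet--Urysohn it satisfies Property~(AS) by the result of \cite{KKL11} quoted above. Second, $\K(M)$ has calibre~$(\omega_1,\omega)$, and since a $\K(M)$-base means exactly $\mathcal{T}_e(G)\leq_T\K(M)$, the neighbourhood poset $\mathcal{T}_e(G)$ inherits calibre~$(\omega_1,\omega)$ by the property recorded in the Preliminaries. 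In neighbourhood terms: every uncountable family of open neighbourhoods of $e$ has an infinite subfamily whose intersection is again a neighbourhood of $e$.

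The argument is by contradiction: assume $G$ is \emph{not} first-countable at $e$, equivalently $\mathcal{T}_e(G)$ has uncountable cofinality. The goal is to manufacture from this a double-indexed family $\{x_{m,n}:m,n\in\omega\}\subseteq G$ with $\lim_m x_{m,n}=e$ for every $n$, but for which no diagonal $\{x_{m_k,n_k}:k\in\omega\}$ (with $m_k,n_k$ strictly increasing) converges to $e$ --- a direct violation of Property~(AS). The heuristic driving this is that the only obstruction to first-countability that can survive inside a Fr\'echet--Urysohn group is a sequential fan $S_\omega$ sitting at $e$, and such a fan is precisely what Property~(AS) forbids; calibre~$(\omega_1,\omega)$ is what should collapse the a priori uncountable failure of first-countability down to such a countable fan.

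Concretely, I would exploit that $\K(M)$, equipped with the Hausdorff metric, is itself separable metric, hence second-countable. A countable dense subset, closed under finite unions, gives a countable directed family $\mathcal{C}_0\subseteq\K(M)$, and non-first-countability says that $\{U_K:K\in\mathcal{C}_0\}$ is not a base at $e$. Iterating, while invoking calibre~$(\omega_1,\omega)$ to prevent the witnesses to non-cofinality from dispersing into an uncountable family with no infinite bounded subfamily, I would extract countably many inclusion-chains $K_{n,0}\subseteq K_{n,1}\subseteq\cdots$, producing for each $n$ a decreasing tower of symmetric neighbourhoods $U_{K_{n,0}}\supseteq U_{K_{n,1}}\supseteq\cdots$ that fails to be a local base, together with witnesses $x_{m,n}\in U_{K_{n,m}}$ with $x_{m,n}\neq e$. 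Monotonicity forces $x_{m,n}\to e$ along each column $n$, so the family is an admissible input for Property~(AS); the role of the second-countable structure of $\K(M)$ is to guarantee that these countably many columns genuinely capture the non-first-countability, so that a diagonal converging to $e$ would furnish a countable base at $e$, contradicting the assumption. Feeding the array to Property~(AS) then delivers the contradiction, completing the proof.

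The step I expect to be the genuine obstacle is the extraction in the previous paragraph: reconciling the \emph{inclusion} order on $\K(M)$ (which governs the base, through monotonicity) with \emph{Hausdorff-metric} approximation (which supplies the countable skeleton), and arranging the bookkeeping so that (i) each column converges to $e$ while (ii) the union of the countably many columns is cofinal enough that any (AS)-diagonal converging to $e$ refutes uncountable cofinality. This is exactly where the separable-metric nature of $M$ must be used beyond the bare calibre~$(\omega_1,\omega)$ property: for a general calibre~$(\omega_1,\omega)$ poset one cannot expect to organise the failure of first-countability into a single countable fan, which is why the theorem is stated for $\K(M)$-bases. As an alternative opening move I would keep in reserve the route through compact subspaces: every compact subspace $C$ of $G$ inherits a $\K(M)$-base and, being Fr\'echet--Urysohn, has countable tightness, so the cited result of \cite{DF20} makes $C$ first-countable, hence metrizable; one is then left precisely with ruling out a fan at $e$ via Property~(AS), which isolates the same combinatorial core.
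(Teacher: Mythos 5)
Your proposal stops exactly where the theorem begins: the ``extraction'' you defer is the whole content of the proof, and the two claims you use to frame it are both false. First, ``monotonicity forces $x_{m,n}\to e$ along each column'' is wrong: a decreasing tower of neighbourhoods $U_{K_{n,0}}\supseteq U_{K_{n,1}}\supseteq\cdots$ yields convergence of points $x_{m,n}\in U_{K_{n,m}}$ only when the tower is already a local base, which is precisely what you are denying; in your situation there is a neighbourhood $V$ of $e$ containing no $U_{K_{n,m}}$, and the witnesses you need (those exhibiting failure of the tower to be a base) must avoid such a $V$, hence cannot converge to $e$. Second, ``a diagonal converging to $e$ would furnish a countable base at $e$'' is unjustified --- a single convergent sequence never produces a base. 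Finally, your driving heuristic, that calibre~$(\omega_1,\omega)$ collapses non-first-countability to a countable fan, cannot power any correct argument here: whether a Fr\'echet--Urysohn group whose neighbourhood filter has calibre~$(\omega_1,\omega)$ must be first-countable is posed as an open question in this very paper, immediately after the theorem, and calibre plays no role in the paper's proof.

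The missing idea is the bridge between the Vietoris topology on $\mathcal{K}(M)$ and its inclusion order, and with it the proof runs in the direction opposite to yours: Property~(AS) is used \emph{positively}, not violated. The paper fixes a countable base $\{B_n\}$ of $\mathcal{K}(M)$ in the Vietoris topology, defines candidate sets $D_n=\bigcap\{U_q: q\in B_n\}$, and shows that for every $p$ some $D_n$ with $B_n$ a small basic neighbourhood of $p$ is a neighbourhood of $e$; since $D_n\subseteq U_p$ whenever $p\in B_n$, the countably many sets $\mathrm{int}(D_n)$ then form a base at $e$. If this failed for some $p$, then $e\in\overline{G\setminus D_{n^p_i}}$ along a decreasing basic tower $B_{n^p_i}\ni p$, the Fr\'echet--Urysohn property (not monotonicity) supplies columns converging to $e$, and (AS) produces a convergent diagonal $x_{m_k,i_k}\to e$ together with witnesses $q_k\in B_{n^p_{i_k}}$ with $x_{m_k,i_k}\notin U_{q_k}$. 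The punchline, absent from your sketch, is that $q_k\to p$ in the Vietoris topology forces $p^\ast=p\cup\bigcup_k q_k$ to be compact, i.e.\ an element of $\mathcal{K}(M)$ with $q_k\subseteq p^\ast$ for all $k$, whence $U_{q_k}\supseteq U_{p^\ast}$ and the diagonal avoids the fixed neighbourhood $U_{p^\ast}$ --- a contradiction. This order-boundedness of Vietoris-convergent sequences is where separable metrizability of $M$ is genuinely used; your countable dense family closed under finite unions cannot substitute for it, because dense subsets of $\mathcal{K}(M)$ (e.g.\ the finite sets) are in general nowhere near cofinal in the inclusion order, so the failure of $\{U_K: K\in\mathcal{C}_0\}$ to be a base carries no information. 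Your fallback through compact subspaces and \cite{DF20} also cannot close the gap: metrizability of every compact subset says nothing about the character of $G$ at $e$ when no compact set is a neighbourhood of $e$.
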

\begin{proof} Let $e$ be the identity element in $G$ and  $\mathcal{U}=\{U_p: p\in P\}$ be an open $P$-base at the identity $e$.

For any separable metric space $M$, $P=\mathcal{K}(M)$ endowed with the Vietoris topology is also separable metrizable, hence second countable. Also if $p_n$ converges to $p$ in $P$, then $p^\ast=p\cup (\bigcup\{p_n\})$ is also compact, hence it is an element in $P$ with $p_n\subset p^\ast$ and $p\subset p^\ast$.

Fix a countable base $\{B_n: n\in\omega\}$ of $P$. For each $p\in P$  and $n\in\omega$ with $p\in B_n$, define $D_n(p)=\bigcap \{U_q: q\in B_n\}$. For each $p$, we list an increasing sequence of natural numbers $\{n^p_i: i\in \omega\}$ such that $p\in B_{n^p_i}$ for each $i$ and $\{B_{n^p_i}: i\in \omega\}$ is a decreasing base of $P$.

First, we claim that for each $p\in P$ there exists $i$ such that $D_{n^p_{i}}(p)$ is a neighborhood of the identity $e$. Suppose for contradiction that there exists $p\in P$ such that $D_{n^p_{i}}(p)$ is not a neighborhood of $e$ for each $i\in \omega$. Hence $e$ is in the closure of $G\setminus D_{n^p_{i}}(p)$ for each $i\in \omega$. Since $G$ is Fr\'{e}chet-Urysohn, there exists a sequence $\{x_{m, i}\}$ in  $G\setminus D_{n^p_{i}}$ for each $i$ such that $\lim_m x_{m, i}=e$. Then we apply Property (AS) to get strictly increasing sequences of natural numbers $\{m_k: k\in\omega\}$ and $\{i_k:k\in\omega\}$ such that $\lim_k x_{m_k, i_k}=e$.  For each $k\in \omega$, pick $q_k\in B_{n^p_{i_k}}$ such that $x_{m_k, i_k}\notin U_{q_k}$. Clearly $q_k$ converges to $p$ in $P$. Hence $p^\ast=p\cup \bigcup\{q_k: k\in \omega\}$ is also compact in $M$, hence it is in $\mathcal{K}(M)$. Since $q_k\leq p^\ast$ for each $k$, $U_{q_k}\supset U_{p^\ast}$. Therefore  $x_{m_k, i_k}\notin U_{p^\ast}$ for each $k\in\omega$. This contradicts with  $\lim_k x_{m_k, i_k}=e$  because $U_{p^\ast}$ is an open neighborhood of $e$.

For each $p\in U$, fix the minimal natural number $i_p$ such that $D_{n^p_{i_p}}(p)$ is a neighborhood of $e$. The family $\{D_{n^p_{i_p}}(p): p\in P\}$ is clearly countable. By the definition of $D_n(p)$, the family $\{\text{int}(D_{n^p_{i_p}}(p)): p\in P\}$ is a countable base at the identity $e$. Hence,  $G$ is first-countable.    \end{proof}

\begin{qu} Suppose $G$ is a Fr\'{e}chet-Urysohn topological group such that a local base at the identity has Calibre~$(\omega_1, \omega)$. Is $G$ first-countable?  \end{qu}

A topological space is angelic if every relatively countably compact set $A$ in the space has a compact closure, i.e., relatively compact, and for each $x$ in the closure of $A$ there exists a sequence in $A$ converging to $x$. Here, a subset $A$ of a space $X$ is relatively countably compact if every sequence in $A$ has a cluster-point in $X$.  A space is said to be strictly angelic if it is angelic and each separable compact subset is first-countable. It is known that any topological group is a uniform space and in uniform spaces, it is shown in \cite{Fl80} that any relatively countably compact subset is precompact. A subset $B$ of a topological group $G$ is said to be precompact in $G$ if for any neighbourhood $U$ of the identity in $G$ there is a finite subset set $F\subset G$ such that $B\subset FU$ and $B\subset UF$. It is known \cite{ArhTka2008} that any precompact set is relatively compact in Ra\v{i}kov complete topological groups.


Let $K$ be a compact space. Gartide and Morgan show in \cite{GMor16} that $K$ is compact if the off-diagonal subspace $K^2\setminus \Delta$ has a compact cover $\mathcal{D}$ with Calibre~$(\omega_1, \omega)$ which swallows all the compact sets of  $K^2\setminus \Delta$, i.e., $\mathcal{D}$ is cofinal in $\mathcal{K}(K^2\setminus \Delta)$.

The proof of the following result is very similar to Theorem 3.9 in \cite{GKL15}.

\begin{thm}\label{sa} Let $P$ be any directed set with Calibre~$(\omega_1, \omega)$.  If $G$ is a topological group with a $P$-base, then every precompact subset $K$ in $G$ is metrizable. Consequently, $G$ is strictly angelic. \end{thm}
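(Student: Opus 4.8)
The plan is to reduce metrizability of a precompact $K$ to the criterion of Gartside and Morgan quoted just before the theorem: it suffices to exhibit, for a suitable compact set $\overline{K}$ containing $K$, a compact cover of $\overline{K}^2\setminus\Delta$ that has calibre~$(\omega_1,\omega)$ and is cofinal in $\mathcal{K}(\overline{K}^2\setminus\Delta)$. To obtain compactness I would first pass to the Raikov completion $\widehat{G}$: since $K$ is precompact, its closure $\overline{K}$ in $\widehat{G}$ is compact, the closures $\widehat{U}_p$ (in $\widehat{G}$) of the members $U_p$ of the given $P$-base at $e$ form a decreasing $P$-indexed neighborhood base at $e$ in $\widehat{G}$, and $\bigcap_p\widehat{U}_p=\{e\}$ by Hausdorffness. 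A metric on $\overline{K}$ restricts to $K$, so metrizability of $\overline{K}$ suffices.

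For each $p\in P$ I would set $O_p=\{(x,y)\in\overline{K}\times\overline{K}:x^{-1}y\in\widehat{U}_p\}$ and $D_p=\overline{K}^2\setminus\mathrm{int}\,O_p$. Each $O_p$ is a neighborhood of the diagonal $\Delta$, so $\Delta\subseteq\mathrm{int}\,O_p$ and $D_p$ is a closed, hence compact, subset of $\overline{K}^2$ disjoint from $\Delta$; and $p\le p'$ gives $\widehat{U}_{p'}\subseteq\widehat{U}_p$, hence $O_{p'}\subseteq O_p$ and $D_p\subseteq D_{p'}$, so $p\mapsto D_p$ is order-preserving. I would then verify the two properties of $\mathcal{D}=\{D_p:p\in P\}$: it covers $\overline{K}^2\setminus\Delta$ (for $x\ne y$, Hausdorffness gives $p$ with $x^{-1}y\notin\widehat{U}_p$, so $(x,y)\in D_p$), and it is cofinal in $\mathcal{K}(\overline{K}^2\setminus\Delta)$ (any compact $L\subseteq\overline{K}^2\setminus\Delta$ is disjoint from $\Delta$, so some entourage $O_p$ misses $L$, whence $L\subseteq D_p$).

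The calibre of $\mathcal{D}$ is where the hypothesis on $P$ enters. The order-preserving surjection $p\mapsto D_p$ has cofinal image, so it carries cofinal subsets of $P$ to cofinal subfamilies of $\mathcal{D}$; hence $\mathcal{D}$, ordered by inclusion, is a Tukey quotient of $P$, and by the preservation of calibre~$(\omega_1,\omega)$ under Tukey quotients recorded in the Preliminaries, $\mathcal{D}$ has calibre~$(\omega_1,\omega)$. Applying the Gartside--Morgan theorem to the cover $\mathcal{D}$ of $\overline{K}^2\setminus\Delta$ then yields that $\overline{K}$ is metrizable, and therefore so is $K$.

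For strict angelicity: every compact subset of $G$ is precompact, hence metrizable by the above, so in particular every separable compact subset is first countable. For the angelic part, given a relatively countably compact $A$, the result of Floret cited earlier shows $A$ is precompact, hence metrizable; and a relatively countably compact subset of a metrizable space is relatively compact with every closure point the limit of a sequence from $A$, so $A$ is relatively compact and $G$ is angelic, hence strictly angelic. The main obstacle I anticipate is the bookkeeping around the completion --- confirming that the $\widehat{U}_p$ genuinely form a $P$-indexed base with $\bigcap_p\widehat{U}_p=\{e\}$ and that $D_p\cap\Delta=\emptyset$ --- together with checking that the cofinal-image map is a legitimate Tukey quotient so that calibre transfers; once these are in place the conclusion is a direct appeal to the two cited theorems.
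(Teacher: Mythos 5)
Your proposal is correct and takes essentially the same approach as the paper: pass to the Raikov completion, turn the $P$-base at the identity into a $P$-ordered compact cover of the off-diagonal of the compact closure that swallows all its compact sets, transfer calibre~$(\omega_1,\omega)$ to that cover via the Tukey quotient, and invoke the Gartside--Morgan criterion. The only differences are cosmetic: the paper defines $L_p=\{(x,y)\in K\times K: xy^{-1}\notin U_p\}$ directly from the open sets $U_p$ (rather than via closures $\widehat{U}_p$ and interiors of $O_p$), and it cites the analogue of Proposition 2.7 of \cite{GKL15} for the completion having a $P$-base instead of verifying it by hand.
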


\begin{proof} Similar to Proposition 2.7 in \cite{GKL15}, the Ra\v{i}kov completion of any group with a $P$-base also has a $P$-base. So we can assume $G$ is Ra\v{i}kov complete.  Hence it is sufficient to show that every compact subset in $G$ is metrizable. Let $K$ be a compact subset of $G$ and $\{U_p: p\in P\}$ be a $P$-base at the identity $e$ in $G$.

For each $p\in P$, define $L_p=\{(x, y)\in K\times K: xy^{-1}\notin U_p\}$. Note that each $L_p$ is closed, hence compact in $K\times K$. We show that $\mathcal{L}=\{L_p: p\in P\}$ is a $P$-ordered compact cover of  the off-diagonal space $K\times K\setminus \Delta$ which swallows all its compact sets. It is straightforward to verify that $L_p\subset L_{p'}$ given $p\leq p'$ in $P$.  Let $C$ be a compact subset of $K\times K\setminus \Delta$. Then $\hat{C}=\{xy^{-1}: (x, y)\in C\}$ is a compact subset of $G$ which doesn't contain the identity $e$. Since $\{U_p: p\in P\}$ is a local base at the identity $e$, there exists a $p_0\in P$ such that $U_{p_0}\subset G\setminus \hat{C} $, furthermore, $C\subset L_{p_0}$. Therefore,  the collection $\mathcal{L}$ swallows all the compact subsets of $K\times K\setminus \Delta$. Then by \cite{GMor16}, $K$ is metrizable. This finishes the proof.  \end{proof}

So next, we discuss some applications of Theorem~\ref{sa} in function spaces. For any Tychonoff space $X$, let $C(X)$ be the space of continuous real-valued functions. We use $C_p(X)$ to  denote $C(X)$ equipped with the pointwise convergence topology in which a basic neighborhood of $f\in C(X)$ is $B(f, F, \epsilon)=\{g: |f(x)-g(x)|<\epsilon \text{ for any } x \in F\}$ where $F$ is a finite subset of $X$ and $\epsilon$ is a positive real number.

The following result is proved in \cite{GMor19}. In fact, it is shown in \cite{GMor19} that the neighborhood filter at the identity element $\textbf{0}$ in $C_p(X)$ is Tukey equivalent to $[X]^{<\omega}$.

\begin{prop} Let $P$ be a poset with Caliber~$(\omega_1,\omega)$ with $P\geq_T \omega$. Then the following are equivalent:
\begin{itemize}
\item[i)] $C_p(X)$ has a $P$-base;
\item[ii)] $C_p(X)$ is first-countable, i.e., has an $\omega$-base;
\item[iii)] $X$ is countable.
\end{itemize}

\end{prop}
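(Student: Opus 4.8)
The plan is to reduce the whole equivalence to the Tukey order on the neighborhood filter at the identity and then to a single calibre computation. Since $C_p(X)$ is a topological group, it has a $P$-base precisely when $\mathcal{T}_{\textbf{0}}(C_p(X))\leq_T P$, and by the result of \cite{GMor19} recorded above we have $\mathcal{T}_{\textbf{0}}(C_p(X))=_T [X]^{<\omega}$. Thus i) is equivalent to the purely order-theoretic statement $[X]^{<\omega}\leq_T P$. I would establish the cycle i)$\Rightarrow$iii)$\Rightarrow$ii)$\Rightarrow$i).

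For i)$\Rightarrow$iii), assume $[X]^{<\omega}\leq_T P$. Since $P$ has calibre~$(\omega_1,\omega)$ and $[X]^{<\omega}$ is a Tukey quotient of $P$, the poset $[X]^{<\omega}$ inherits calibre~$(\omega_1,\omega)$ by the inheritance fact stated in the Preliminaries. The crux is then to observe that $[X]^{<\omega}$ can have calibre~$(\omega_1,\omega)$ only when $X$ is countable: if $X$ were uncountable, fix distinct points $\{x_\alpha:\alpha<\omega_1\}$ and consider the $\omega_1$-sized set of singletons $S=\{\{x_\alpha\}:\alpha<\omega_1\}$. Any infinite subfamily of $S$ consists of infinitely many distinct singletons, and such a family has an upper bound in $[X]^{<\omega}$ only if a single finite set contains infinitely many distinct points -- impossible. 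Hence no infinite subset of $S$ is bounded, so $S$ witnesses the failure of calibre~$(\omega_1,\omega)$, a contradiction. Therefore $X$ is countable. This singleton obstruction is the only real content of the argument; everything else is bookkeeping.

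For iii)$\Rightarrow$ii), if $X$ is countable then $[X]^{<\omega}$ is a countable poset, so $\mathcal{T}_{\textbf{0}}(C_p(X))=_T [X]^{<\omega}$ has countable cofinality; that is, $C_p(X)$ admits a countable base at $\textbf{0}$ and, being a topological group, is first-countable. Finally, for ii)$\Rightarrow$i), first-countability gives $\mathcal{T}_{\textbf{0}}(C_p(X))=_T\omega$, and since $P\geq_T\omega$ by hypothesis we obtain $\mathcal{T}_{\textbf{0}}(C_p(X))\leq_T P$, i.e.\ $C_p(X)$ has a $P$-base. This is the one place the standing assumption $P\geq_T\omega$ is used, and it closes the cycle.
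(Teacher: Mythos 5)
Your proposal is correct and takes essentially the same route as the paper: the paper supplies no argument of its own for this proposition, instead citing \cite{GMor19} together with the fact that the neighborhood filter at $\textbf{0}$ in $C_p(X)$ is Tukey equivalent to $[X]^{<\omega}$, and that is exactly the fact on which your argument rests. The details you add beyond the citation---reducing i) to $[X]^{<\omega}\leq_T P$, the family of singletons witnessing that $[X]^{<\omega}$ fails calibre~$(\omega_1,\omega)$ whenever $X$ is uncountable, and the transitivity step $\mathcal{T}_{\textbf{0}}(C_p(X))\leq_T\omega\leq_T P$ using the hypothesis $P\geq_T\omega$---are all correct.
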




For any Tychonoff space $X$, it is known (see \cite{KKL11}) that $C_p(X)$ is angelic if  $X$ is web-compact. A topological space $X$ is said to be web-compact if there exists a nonempty subset $\Sigma$ of $\omega^\omega$ and a family $\{A_{\alpha}\}$ of subsets of $X$ such that the following two conditions hold:
\begin{itemize}
    \item[i)] $X=\overline{\bigcup \{A_{\alpha}: \alpha\in \Sigma\}}$;
    \item[ii)] the sequence $(x_k)_k$ has a cluster point in $X$ if $x_k\in C_{\alpha|_k}$ for each $k$, here $C_{\alpha|_k}=\bigcup \{A_\beta: \beta(i)=\alpha(i) \text{ for all }i< k\}$.
\end{itemize}

We use $C_k(X)$ to  denote $C(X)$ equipped with the compact-open topology in which a basic neighborhood of $f\in C(X)$ is $B(f, K, \epsilon)=\{g: |f(x)-g(x)|<\epsilon \text{ for any } x \in K\}$ where $K$ is a compact subset of $X$ and $\epsilon$ is a positive real number.

Let $M$ be a metric space. It is known  (see Corollary 6.10 in \cite{KKL11}) that $C_k(M)$ is angelic if and only if $M$ is separable. Here, we get a stronger result below.
\begin{cor} Let $M$ be a metric space. Then the following are equivalent:
\begin{itemize}
    \item[i)] $M$ is separable;
    \item[ii)] $C_k(M)$ is strictly angelic;
    \item[iii)] $C_k(M)$ is angelic.
\end{itemize}
\end{cor}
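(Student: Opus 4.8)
The plan is to prove the cycle i) $\Rightarrow$ ii) $\Rightarrow$ iii) $\Rightarrow$ i). Two of these implications are immediate. The implication ii) $\Rightarrow$ iii) holds because strict angelicity includes angelicity by definition. The implication iii) $\Rightarrow$ i) is precisely the known characterization recalled just before the statement (Corollary 6.10 in \cite{KKL11}), namely that $C_k(M)$ is angelic if and only if $M$ is separable. Hence all the genuine content lies in i) $\Rightarrow$ ii): showing that separability of $M$ forces $C_k(M)$ to be strictly angelic.

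For this I would invoke Theorem~\ref{sa}, which reduces the task to exhibiting a directed poset $P$ with calibre $(\omega_1,\omega)$ such that the topological group $C_k(M)$ has a $P$-base. The natural candidate is $P=\mathcal{K}(M)\times\omega$. Indeed, a local base at the identity $\mathbf{0}$ of $C_k(M)$ is given by the sets $B(\mathbf{0},K,1/n)=\{g:|g(x)|<1/n \text{ for all } x\in K\}$, indexed by $(K,n)\in\mathcal{K}(M)\times\omega$; since enlarging $K$ and increasing $n$ shrinks the neighborhood, the family $\{B(\mathbf{0},K,1/n)\}$ is ordered exactly as required, so $C_k(M)$ has a $(\mathcal{K}(M)\times\omega)$-base.

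It then remains to verify that $P=\mathcal{K}(M)\times\omega$ has calibre $(\omega_1,\omega)$. Because $M$ is separable metric, $\mathcal{K}(M)$ has calibre $(\omega_1,\omega)$, as recorded in the Preliminaries; and $\omega$ is a countable poset, so Proposition~\ref{cw1w} gives that the product $\mathcal{K}(M)\times\omega$ again has calibre $(\omega_1,\omega)$. Applying Theorem~\ref{sa} with this $P$ then shows that every precompact subset of $C_k(M)$ is metrizable and that $C_k(M)$ is strictly angelic, which establishes i) $\Rightarrow$ ii) and closes the cycle.

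The main obstacle is not any single step in isolation, since each is short, but rather that the substantive work is packaged inside Theorem~\ref{sa}, whose strength ultimately rests on the Gartside--Morgan criterion deducing metrizability of a compact $K$ from a calibre-$(\omega_1,\omega)$ compact cover of $K^2\setminus\Delta$ that swallows compact sets. Once the neighborhood filter of $C_k(M)$ is correctly identified with $\mathcal{K}(M)\times\omega$ and its calibre confirmed, the corollary follows formally; the only point requiring care is the routine but necessary check that $\{B(\mathbf{0},K,1/n)\}$ is genuinely cofinal among all basic compact-open neighborhoods of $\mathbf{0}$, i.e. that restricting $\epsilon$ to the values $1/n$ loses nothing.
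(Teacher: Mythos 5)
Your proposal is correct and follows essentially the same route as the paper: reduce everything to i) $\Rightarrow$ ii), exhibit the local base $\{B(\mathbf{0},K,1/n): K\in\mathcal{K}(M),\ n\in\mathbb{N}\}$ showing $C_k(M)$ has a $\mathcal{K}(M)\times\omega$-base, note this poset has calibre~$(\omega_1,\omega)$, and apply Theorem~\ref{sa}, with ii) $\Rightarrow$ iii) trivial and iii) $\Rightarrow$ i) quoted from \cite{KKL11}. The only difference is that you justify the calibre claim explicitly via Proposition~\ref{cw1w}, where the paper simply cites it as known.
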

\begin{proof} The implication of ii)$\rightarrow$iii) is clear and  iii)$\rightarrow$i) is proved in \cite{KKL11}. It remains to prove i)$\rightarrow$ii).

Let $M$ be a separable metric space. Then $\mathcal{B}=\{B(\textbf{0}, K, 1/n): K\in \mathcal{K}(M) \text{ and }n \in\mathbb{N}\}$ is a local base at the identity $\textbf{0}$ in $C_k(M)$. Clearly, $\mathcal{B}=_T \mathcal{K}(M)\times \omega$ which is known to have Calibre~$(\omega_1, \omega)$. Hence by Theorem~\ref{sa}, $C_k(M)$ is strictly angelic.\end{proof}



It is proved in \cite{KKL11} that $C_k(\omega_1)$ has an $\omega^\omega$-base under the assumption  $\omega_1=\mathfrak{b}$, but not when $\omega_1<\mathfrak{b}$. Note that $\mathcal{K}(\omega_1)=_T\omega_1$, hence it has Calibre~$(\omega_1, \omega)$. Note that $C_{k}(\omega_1)$ has a $\mathcal{K}(\omega_1)\times \omega$-base, hence an $\omega_1\times \omega$-base. Note that $\omega_1$ has Calibre~$(\omega_1, \omega)$, hence  so does $\omega_1\times \omega$. Therefore, $C_k(\omega_1)$ is strictly angelic by Theorem~\ref{sa}. Also, using the fact $\omega_1\leq_T\omega^\omega\times \omega_1\leq_T\mathcal{K}(\mathbb{Q})$ proved in \cite{GM17}, it has a $\mathcal{K}(\mathbb{Q})$-base which clearly has Calibre~$(\omega_1, \omega)$.

\begin{ex} The function space $C_k(\omega_1)$ has a $\mathcal{K}(\mathbb{Q})$-base, hence it is strictly angelic. \end{ex}

Motivated by Theorem~\ref{sa}, it is natural to ask as in  \cite{GKL15}*{Problem 6.5}  whether it is sufficient for a topological group $G$ to have an $\omega^\omega$-base if all precompact subsets in $G$ are metrizable and the character of $G$ is $\leq \mathfrak{d}$. Next we give an example which gives a consistent negative answer to this problem.

\begin{ex}\label{c_e} There is a topological Boolean group of character $\chi(G)\leq \mathfrak{d}$ such that all the precompact subsets in $G$ are metrizable and $G$ doesn't have an $\omega^\omega$-base consistently, but it has a $\mathcal{K}(\mathbb{Q})$-base in ZFC.
\end{ex}

\begin{proof} Let $X=2^{\omega_1}$ with the $G_\delta$-topology, i.e., a basic open neighborhood of any $x\in X$ is $B_\alpha(x)=\{y: y(\gamma)=x(\gamma) \text{ for all }\gamma<\alpha\}$ for $\alpha<\omega_1$. Define the operation on $X$ as $(x+y)(\alpha)=x(\alpha)+y(\alpha)$ for each $\alpha<\omega_1$ following the rules: $0+0=0$, $0+1=1+0=1$, and $1+1=0$. It is straightforward to verify that $X$ is a topological Boolean group.  Also, any countable subset of  $X$ is closed and discrete, hence any precompact subset in $X$ is finite, therefore metrizable.

Let $\mathbf{0}$ be the identity element in $X$, i.e., $\mathbf{0}(\alpha)=0$ for each $\alpha<\omega_1$. For each $\alpha<\omega_1$, define $B_\alpha(\textbf{0})=\{y: y(\gamma)=0 \text{ for all }\gamma<\alpha\}$. Then $\{B_\alpha(\textbf{0}): \alpha<\omega_1\}$ is a local base at $\textbf{0}$. Hence the topological group $X$ has an $\omega_1$-base and its character is $\leq \mathfrak{d}$. Under the assumption $\mathfrak{b}>\omega_1$, $\omega_1$ is not a Tukey quotient of $\omega^\omega$, hence $X$ doesn't have an $\omega^\omega$-base. Using the fact that $\mathcal{K}(\mathbb{Q})\geq_T\omega_1$ again, $X$ has a $\mathcal{K}(\mathbb{Q})$-base in ZFC. \end{proof}


The answer to the following question is still unknown.
\begin{qu}Let $G$ be a topological group of character $\chi(G)\leq \mathfrak{d}$. If all precompact subsets in $G$ are metrizable, does $G$ admit an $\omega^\omega$-base under the assumption $\omega_1=\mathfrak{b}$? \end{qu}

\begin{ex} There is a $\sigma$-compact Fr\'{e}chet-Urysohn topological Boolean group which is not metrizable. \end{ex}

\begin{proof} Let $X=\{x: x\in 2^{\omega_1} \text{ and } \text{supp}(x) \text{ is finite}\}$, here supp$(x)=\{\alpha: x(\alpha)\neq 0\}$. The operation on $X$ is same as the one defined in Example~\ref{c_e}. Clearly, $X$ is a Boolean group. Consider $X$ as a subspace of $2^{\omega_1}$ equipped with the product topology. Then the space $X$ is Fr\'{e}chet-Urysohn, but not metrizable.

Next we show that $X$ is $\sigma$-compact. Define $K_n=\{x: x\in X\text{ and } \text{supp}(x)\leq n\}$. It suffices to show that $K_n$ is a closed subset of $2^{\omega_1}$ for each $n\in \omega$. Fix $n\in\omega$ and $x\notin K_n$.  Choose $\{\alpha_1, \ldots, \alpha_{n+1}\}$ such that $x(\alpha_i)=1$ for each $i=1, \ldots, n+1$. Let $B(x)=\{y: y(\alpha_i)=x(\alpha_i)\text{ for each }i=1, \ldots, n+1\}$ which is an open neighborhood of $x$. And, $K_n\cap B(x)=\emptyset$. Hence $K_n$ is compact for each $n$. Therefore, $X=\bigcup\{K_n: n\in \omega\}$ is $\sigma$-compact. \end{proof}


\textbf{Acknowledgements} I am very grateful to Professors Gary Gruehage and Paul Gartside for their valuable comments and suggestions which improve the paper. I would also like to thank Professor Paul Gartside for Example~\ref{ex_dom} and the anonymous reviewer for his/her valuable suggestions and corrections.

\begin{bibdiv}

\def\cprime{$'$}

\begin{biblist}

\bib{ArhTka2008}{book}{
   author={A.V. Arhangel'ski\'{i}},
   author={M.G. Tkachenko}
   title={Topological Groups and Related Structures},
   publisher={Atlantis Press and World Sci., Paris},
   date={2008},
}

\bib{Douwen84}{article}{
   author={van Douwen, E.K.},
   title={The Integers and Topology},
   journal={Handbook of Set-Theoretic Topology, North-Holland, Amsterdam},
   date={1984},
   pages={111--167},
}

  \bib{Banakh2019}{article}{
   author={Banakh, Taras},
   title={Topological spaces with an $\omega^{\omega}$-base},
   journal={Dissertationes Math.},
   volume={538},
   date={2019},
   pages={141},
   issn={0012-3862},
   review={\MR{3942223}},
   doi={10.4064/dm762-4-2018},
}

  \bib{BL18}{article}{
   author={Banakh, Taras},
   author={Leiderman, Arkady}
   title={$\omega^\omega$-dominated function spaces and $\omega^\omega$-bases in free objects of topological algebra},
   journal={Topology and Appl.},
   volume={241},
   date={2018},
   pages={203--241},
}

  \bib{DF20}{article}{
   author={Dow, Alan},
   author={Feng, Ziqin}
   title={Compact spaces with a $P$-base},
   journal={arxiv.org/abs/2008.04405},
}

\bib{Feng2019}{article}{
    AUTHOR = {Feng, Ziqin},
     TITLE = {Spaces with a {$\Bbb {Q}$}-diagonal},
   JOURNAL = {Fund. Math.},
    VOLUME = {245},
      YEAR = {2019},
    NUMBER = {3},
     PAGES = {305--320},
      ISSN = {0016-2736},
  MRNUMBER = {3914945},
       DOI = {10.4064/fm615-8-2018},
}

\bib{FKLS06}{article}{
   author={Ferrando, J.C.}
   author={K\c{a}kol, J.}
   author={L'{o}pez-Pellicer, M.}
   author={Saxon, S.A.}
   title={Tightness and distinguished Fr\'{e}chet spaces}
   journal={J. Math. Anal. Appl.}
   volume={324}
   date ={2006}
   pages={862--881}
   }

\bib{Fl80}{book}{
   author={Floret, K.},
   title={Weakly compact sets},
    series={Lecture notes in Math. 801},
   publisher={Springer, Berlin},
   date={1980},
   }

\bib{GKL15}{article}{
   author={Gabriyelyan, Saak}
   author={K\c{a}kol, Jerzy}
   author={Leiderman, A.G.},
   title={On topological groups with a small base and metrizability},
   journal={Fund. Math.},
   volume={229},
   date={2015},
   number={2},
   pages={129--158},
   doi={10.4064/fm229-2-3},
}

 \bib{Gar20}{article}{
   author={Gartside, Paul},
   title={Tukey Order and Diversity of Free Abelian Topological Groups},
   journal={Preprint},
}

 \bib{GM16}{article}{
   author={Gartside, Paul},
   author={Mamatelashvili, Ana}
   title={The Tukey order on compact subsets of separable metric spaces},
   journal={J. Symb. Log.},
   number={1},
   volume={81},
   date={2016},
   pages={18--200},
   issn={0022-4812},
   review={\MR{3471135}},
   doi={10.1017/jsl.2015.49},
}

 \bib{GM17}{article}{
   author={Gartside, Paul},
   author={Mamatelashvili, Ana}
   title={Tukey order, calibres and the rationals},
   journal={Ann. Pure Appl. Logic},
   number={1},
   volume={172},
   date={2021},
}
 \bib{GMor16}{article}{
   author={Gartside, Paul},
   author={Morgan, Jeremiah},
   title={Calibres, compacta and diagonals},
   journal={Fund. Math.},
   number={1},
   volume={232},
   date={2016},
   pages={1--19},
}

 \bib{GMor19}{article}{
   author={Gartside, Paul},
   author={Morgan, Jeremiah},
   title={Local networks for function spaces},
   journal={Houston J. Math.},
   number={3},
   volume={45},
   date={2019},
   pages={893--923},
}

\bib{San2020}{article}{
   author={Guerrero S\'{a}nchez, David },
   title={Spaces with an $M$-diagonal},
   journal={Rev. R. Acad. Cienc. Exactas F\'{i}s. Nat. Ser. A Mat. Racsam},
   volume={114},
   date={2020},
   number={1},
   pages={16--24},
   issn={1578-7303},
   review={\MR{4039696}},
   doi={10.1007/S13398-019-00745-x}
}

 \bib{GTk}{article}{
   author={Guerrero S\'{a}nchez, D.},
   author={Tkachuk, V.},
   title={Domination by a Polish space of the complement of the diagonal of $X$ implies that $X$ is cosmic},
   journal={Topology and Appl.},
   volume={212},
   date={2016},
   pages={81--89},
}

\bib{KKL11}{book}{
   author={K\c{a}kol, J.},
   author={Kubi\'{s}, W.}
   author={Lopez-Pellicer, M.}
   title={Descriptive topology in selected topics of functional analysis},
   publisher={Developments Math., Springer},
   date={2011},
}

\bib{LPT17}{article}{
   author={Leiderman, A.G.},
   author={Pestov, V.},
   author={Tomita, A.H.},
   title={On topological groups admitting a base at the identity indexed by $\omega^\omega$},
   journal={Fund. Math.},
   volume={238},
   date={2017},
   pages={79--100},
}

\bib{AMETD}{book}{
   author={Mamatelashvili, Ana},
   note={Thesis (Ph.D.)--University of Pittsburgh},
   title={Tukey order on sets of compact subsets of topological spaces},
   date={2014},
   pages={141},
   ISBN = {978-1321-42044-2},
   publisher={ProQuest LLC, An Arbor, MI},
   }

 \bib{SF20}{article}{
   author={Shen, Rongxin},
   author={Feng, Ziqin},
   title={On $\omega^\omega$-bases and $\omega^\omega$-weak bases},
   journal={Houston J. Math.},
   number={2},
   volume={46},
   date={2020},
   pages={507--518},
}

\bib{Tuk40}{book}{
   author={Tukey, John W.},
   title={Convergence and Uniformity in Topology},
   series={Annals of Mathematics Studies, no. 2},
   publisher={Princeton University Press, Princeton, N. J.},
   date={1940},
   pages={ix+90},
   review={\MR{0002515}},
}

\end{biblist}
\end{bibdiv}
\end{document}